\newcommand{\pl}[1]{\foreignlanguage{polish}{#1}}
\newcommand{\ZZ}{\mathbb{Z}}
\newcommand{\RR}{\mathbb{R}}
\newcommand{\CC}{\mathbb{C}}
\newcommand{\NN}{\mathbb{N}}
\newcommand{\calV}{\mathcal{V}}
\newcommand{\calM}{\mathcal{M}}
\newcommand{\calU}{\mathcal{U}}
\newcommand{\calF}{\mathcal{F}}
\newcommand{\calO}{\mathcal{O}}
\newcommand{\sprod}[2]{{\langle #1, #2\rangle}}
\newcommand{\norm}[1]{{\left\lvert #1 \right\rvert}}
\newcommand{\vnorm}[1]{{\left\lVert {#1} \right\rVert}}
\newcommand{\abs}[1]{{\lvert {#1} \rvert}}
\newcommand{\der}[1]{\partial^{#1}}
\newcommand{\dth}{{\: \rm d}\theta}
\newcommand{\sign}{\operatorname{sign}}
\newcommand{\Log}{\operatorname{Log}}
\newcommand{\conv}{\operatorname{conv}}
\newcommand{\dist}{\operatorname{dist}}
\renewcommand{\atop}[2]{\substack{{#1}\\{#2}}}
\theoremstyle{plain}
\newtheorem{theorem}{Theorem}[section]
\newtheorem{lemma}[theorem]{Lemma}
\newtheorem{claim}{Claim}
\newtheorem{corollary}[theorem]{Corollary}
\theoremstyle{plain}
\newcounter{thm}
\newtheorem{main_theorem}[thm]{Theorem}
\theoremstyle{remark}
\newtheorem{remark}{Remark}
\newtheorem{example}{Example}
\begin{document}
\selectlanguage{english}

\title[Long time behaviour of random walks]
{Long time behaviour of random walks on the integer lattice}

\author{Bartosz Trojan}

\address{
    Instytut Matematyczny\\
	Uniwersytet \pl{Wroc"lawski}\\
	Pl. Grun\-waldzki 2/4\\
	50-384 \pl{Wroc"law}\\
	Poland}
\email{trojan@math.uni.wroc.pl}

\begin{abstract}
	We consider an irreducible finite range random walk on the $d$-dimensional integer lattice and study asymptotic
	behaviour of its transition function $p(n; x)$. In particular, for simple random walk our asymptotic formula
	is valid as long as $n (n - \norm{x}_1)^{-2}$ tends to zero.
\end{abstract}

\keywords{asymptotic formula, random walk, integer lattice}

\subjclass[2010]{60G50, 60F05, 60F99, 60B10}

\maketitle

\section{Introduction}
In 1921, with the article \cite{polya0} P\'olya pioneered research on the simple random walk on the integer lattice.
Using Fourier analysis he proved that $p(n; x)$, the $n$'th step transition function, satisfies
\footnote{For $x \in \RR^d$ and $p \in (1, \infty)$ we set
$\norm{x}_p = \big(\abs{x_1}^p + \cdots + \abs{x_d}^p\big)^{1/p}$} 
\begin{align*}
	\lim_{n \to +\infty} (2 n)^{\frac{d}{2}} p(2 n; x) &= 2 d^{\frac{d}{2}} (2 \pi)^{-\frac{d}{2}}, \quad
	\text{if } \norm{x}_1 \equiv 0 \pmod 2,\\
	\lim_{n \to +\infty} (2n-1)^{\frac{d}{2}} p(2n-1; x) &= 2 d^{\frac{d}{2}} (2 \pi)^{-\frac{d}{2}}, \quad
	\text{if } \norm{x}_1 \equiv 1 \pmod 2,
\end{align*}
for any $x \in \ZZ^d$. Essentially, P\'olya's proof shows
that (see Spitzer \cite[Remark after P7.9]{sp})
\[
	p(n; x) = 
	\begin{cases}
		2 d^{\frac{d}{2}} (2  \pi n)^{-\frac{d}{2}} e^{-\frac{d}{2 n} \norm{x}_2^2} + o\big(n^{-\frac{d}{2}} \big)
		& \text{if } \norm{x}_1 \equiv n \pmod 2,\\
		0 & \text{otherwise,}
	\end{cases}
\]
uniformly with respect to $n \in \NN$ and $x \in \ZZ^d$. As it may be easily seen the local limit theorem is very
inaccurate if $\norm{x}_2$ is larger than $\sqrt{n}$. Further development of the Fourier method allowed to gain better
control over the error term for large $\norm{x}_2$ (see Smith \cite{smith}, Spitzer \cite[P7.10]{sp}, Ney and Spitzer
\cite[Theorem 2.1]{ns}). Namely,
\begin{align*}
	p(n; x) = 
	\begin{cases}
		2 d^{\frac{d}{2}} (2 \pi n)^{-\frac{d}{2}} e^{-\frac{d}{2 n} \norm{x}_2^2} 
		+ o\big(n^{-\frac{d}{2}+1} \norm{x}_2^{-2} \big),
		& \text{if } \norm{x}_1 \equiv n \pmod 2,\\
		0 & \text{otherwise,}
	\end{cases}
\end{align*}
uniformly with respect to $x \in \ZZ^d \setminus \{0\}$. Let us observe that the error in the approximation of $p(n; x)$
is additive and may become big compared to the first term. In many applications, it is desired to have an asymptotic
formula for $p(n; x)$ valid on the largest possible region with respect to $n$ and $x$. There are some results in this
direction available. In particular, (see Lawler \cite[Propositon 1.2.5]{law}, Lawler and Limi\v{c} 
\cite[Theorem 2.3.11]{lawlim}) there is $\rho > 0$ such that for all $n \in \NN$ and $x \in \ZZ^d$,
if $\norm{x}_2 \leq \rho n$ then
\begin{align*}
	p(n; x) =
	\begin{cases}
		d^{\frac{d}{2}} (2 \pi n)^{-\frac{d}{2}} e^{-\frac{d}{2n} \norm{x}_2^2}
		\big(2 + \calO(n^{-1}) + \calO(n^{-3} \norm{x}_2^4)\big), 
		& \text{if } \norm{x}_1 \equiv n \pmod 2,\\
		0 & \text{otherwise.}
	\end{cases}
\end{align*}
We want to emphasize that the above asymptotic formula is useful only in the region where $\norm{x}_2 = o(n^{3/4})$.
Therefore, there arises a natural question:
\begin{quote}
	\em
	Is there an asymptotic formula for $p(n; x)$ which is valid on a larger region than $\norm{x}_2 = o(n^{3/4})$? 
\end{quote}
The subject of the present article is to give a positive answer to the posed question. Although, the asymptotic
will be formulated for the simple random walk, the actual result is valid for any irreducible finite range random
walk with the mean zero or not (see Theorem \ref{th:7}). Before we state the main theorem, let us introduce some notation.
For $\delta \in \calM = \big\{x \in \RR^d : \norm{x}_1 < 1 \big\}$ we set
\begin{equation}
	\label{eq:41}
	\phi(\delta) = \max\big\{\sprod{x}{\delta} - \log \kappa(x) : x \in \RR^d \big\}
\end{equation}
where $\kappa$ is a function on $\RR^d$ defined by
\[
	\kappa(x) = \frac{1}{d} \big(\cosh x_1 + \cdots + \cosh x_d\big).
\]
In Section \ref{sec:2} we prove the following theorem.
\begin{main_theorem}
\label{th:3}
For all $x \in \ZZ^d$ and $n \in \NN$, if $\norm{x}_1 \equiv n \pmod 2$ then
\begin{equation}
	\label{eq:8} 
	p(n; x) = 
	(2 \pi n)^{-\frac{d}{2}} (\det B_s\big)^{-\frac{1}{2}} e^{-n \phi(\delta)}
	\big(2 + \calO\big(n^{-1} (1 - \norm{\delta}_1)^{-2}\big)\big)
\end{equation}
otherwise, $p(n; x) = 0$, where $\delta = \frac{x}{n}$ and $s=\nabla \phi(\delta)$.
\end{main_theorem}
Some comments are in order. First, observe that the asymptotic formula \eqref{eq:8} is valid in a region excluding
only the case when $n (1 - \norm{\delta}_1)^2$ stays bounded. Although, the function $\phi$ is positive
convex and comparable to $\norm{\: \cdot \:}_2^2$, it cannot be replaced in the asymptotic formula
by $\norm{\: \cdot \:}_2^2$ without introducing an additional error term, see Remark \ref{rem:1}. For processes with
continuous time it was observed by Davis in \cite{dav} that in order to get an upper bound for the heat kernel on
a larger region one has to introduce a non-Gaussian factor. Therefore, Theorem \ref{th:3} may be considered as a
discrete time counterpart of \cite{dav}. Finally, however the quadratic form $B_x$ is given explicitly by \eqref{eq:39},
the mapping $\calM \ni \delta \mapsto s(\delta)$ is an implicit function. We want to stress the fact that when
$\norm{\delta}_1$ approaches one, the value of $\norm{s}_2$ tends to infinity. In particular, the quadratic form $B_s$
degenerates. For this reason a more convenient form of Theorem \ref{th:3} is given in Corollary \ref{cor:1}.
Namely, for all $\epsilon > 0$, $x \in \ZZ^d$ and $n \in \NN$, if $\norm{x}_1 \leq n(1-\epsilon)$ then
\begin{equation*}
	p(n; x) = 
	\begin{cases}
	d^{\frac{d}{2}} (2 \pi n)^{-\frac{d}{2}} e^{-n\phi(\delta)} 
	\big(2 + \calO(\norm{\delta}_1) + \calO(n^{-1})\big), & \text{if } \norm{x}_1 \equiv n \pmod 2,\\
	0 & \text{otherwise.}
	\end{cases}
\end{equation*}
The last asymptotic formula is useful as long as $\norm{x}_1 = o(n)$.

Let us comment about the method of the proof. First, with a help of the Fourier inversion formula, we write $p(n; x)$
as an oscillatory integral. We split the integral into two parts. The first part we analyse by the Laplace method. This is
not a straightforward application of it, since the phase function degenerates as $\norm{\delta}_1$ approaches one. 
To estimate the second part we develop a geometric argument, which allows us to control the way how the quadratic form
$B_s$ degenerates.

The result obtained in this article has already found an application in the study of subordinated random walks
(see \cite{bct}) which are spread over all $\ZZ^d$ and do not have second moment. Also the geometric method developed here
can be successfully applied in much wider context. Namely, to study finitely supported isotropic random walks on affine
buildings (see \cite{tr}). There is also an ongoing project to get
the precise asymptotic formula for random walks with internal degrees of freedom extending the one obtained by 
Kr\'amli and Sz\'asz \cite{kSz} (see also Guivarc'h \cite{guiv}). Finally, Appendix \ref{sec:1} contains application of
Theorem \ref{th:7} to triangular and hexagonal lattices. This has to be compared with results recently obtained in
\cite{ks1, ks2, ikk}.

\subsection{Notation}
We use the convention that $C$ stands for a generic positive constant whose value can change
from line to line. The set of positive integers is denoted by $\NN$. Let $\NN_0 = \NN \cup \{0\}$.

\section{Preliminaries}

\subsection{Random walks}
Let $p(\cdot, \cdot)$ denote the transition density of a random walk on the $d$-dimensional integer lattice.
Let $p(x) = p(0, x)$. For $n \in \NN_0$ and $x \in \ZZ^d$ we set
\[
	p(n+1; x) = \sum_{y \in \ZZ^d} p(n; y)  p(x - y)
\]
and $p(1; x) = p(x)$. The support of $p$ is denoted by $\calV$, i.e.
\[
	\calV = \big\{v \in \ZZ^d : p(v) > 0 \big\}.
\]
Let $\kappa: \CC^d \rightarrow \CC$ be an exponential polynomial defined by
\[
	\kappa(z) = \sum_{v \in \calV} p(v) e^{\sprod{z}{v}},
\]
where $\sprod{\:\cdot\:}{\:\cdot\:}$ is the standard scalar product on $\CC^d$
\[
	\sprod{z}{w} = \sum_{j = 1}^d z_j \overline{w_j}.
\]
In particular, $\RR^d \ni \theta \mapsto \kappa(i\theta)$ is the characteristic function of $p$. We set
\begin{equation}
	\label{eq:43}
	\calU = \big\{\theta \in [-\pi, \pi)^d : \abs{\kappa(\theta)} = 1 \big\}.
\end{equation}
Finally, the interior of the convex hull of $\calV$ in $\RR^d$ is denoted by $\calM$. 

In this article, we study the asymptotic behaviour of transition functions of irreducible random walks. Let us recall
that the random walk is \emph{irreducible} if for each $x \in \ZZ^d$ there is $n \in \NN$ such that $p(n; x) > 0$.
By $d \in \NN$ we denote the period of $p$, that is 
\[
	d = \gcd\big\{n \in \NN : p(n; 0) > 0\big\}.
\]
Then the space $\ZZ^d$ decomposes into $r$ disjoint classes
\[
	X_j = \big\{x \in \ZZ^d : p(j + k r; x) > 0 \text{ for some } k \geq 0 \big\}
\]
for $j = 0, \ldots, r-1$. We observe that for $j \in \{0, \ldots, r-1\}$ and $x \in X_j$, if $n \not \equiv j \pmod r$
then
\[
	p(n; x) = 0.
\]
For each $x \in \ZZ^d$, by $m_x$ we denote the smallest $m \in \NN$ such that $p(m; x) > 0$, thus
$x / m_x \in \overline{\calM}$. Notice that there is $C \geq 1$ such that for all $x \in \ZZ^d$
\begin{equation}
	\label{eq:42}
	C^{-1} \norm{x}_1 \leq m_x \leq C \norm{x}_1.
\end{equation}
Indeed, let $\{e_1, \ldots, e_d\}$ be the standard basis of $\RR^d$. Since
\[
	e_j = \sum_{v \in \calV} m_{j, v} v,
	\quad \text{ and } \quad
	-e_j = \sum_{v \in \calV} m_{-j, v} v,
\]
for some $m_{j, v}, m_{-j, v} \in \NN_0$ satisfying
\[
	m_{e_j} = \sum_{v \in \calV} m_{j, v},
	\quad \text{ and } \quad
	m_{-e_j} = \sum_{v \in \calV} m_{-j, v},
\]
by setting $\varepsilon_j = \sign \sprod{x}{e_j}$ we get
\[
	x = \sum_{v \in \calV} \Big(\sum_{j = 1}^d m_{\varepsilon_j j, v} \abs{\sprod{x}{e_j}} \Big) v.
\]
Hence,
\[
	m_x \leq \norm{x}_1 \sum_{j = 1}^d \big(m_{e_j} + m_{-e_j}\big).
\]
which, together with boundedness of $\overline{\calM}$, implies \eqref{eq:42}.

Next, we observe that there is $K > 0$ such that for all $k \geq K$ 
\[
	p(k r; 0) > 0,
\]
thus for all $x \in \ZZ^d$ and $n \geq Kr + m_x$
\begin{equation}
	\label{eq:6}
	p(n; x) =
	\begin{cases}
	> 0 & \text{ if } n \equiv m_x \pmod r, \\
	= 0 & \text{ otherwise.}
	\end{cases}
\end{equation}
Since
\begin{equation}
	\label{eq:3}
	\bigg\{\frac{e_1}{m_{e_1}}, -\frac{e_1}{m_{-e_j}}, \ldots, \frac{e_d}{m_{e_d}}, -\frac{e_d}{m_{-e_d}} \bigg\}
\end{equation}
do not lay on the same affine hyperplane, the interior of the convex hull of \eqref{eq:3} is a non-empty 
subset of $\calM$.

For each $x \in \RR^d$, by $B_x$ we denote a quadratic form on $\RR^d$ defined by
\begin{equation}
	\label{eq:39}
	B_x(u, u)
	=
	D_u^2 \log \kappa(x),
\end{equation}
where $D_u$ denotes the derivative along a vector $u$, i.e.
\[
	D_u f(x) = \left. \frac{{\rm d}}{{\rm d}t} f(x + t u) \right|_{t = 0}.
\]
Since
\[
	D_u \log \kappa(x) = \sum_{v \in \calV} \frac{p(v) e^{\sprod{x}{v}}}{\kappa(x)} \sprod{u}{v}
\]
and
\[
	D_u \left( \frac{p(v) e^{\sprod{x}{v}}}{\kappa(x)} \right)
	=
	\frac{p(v) e^{\sprod{x}{v}}}{\kappa(x)} \sprod{u}{v}
	-
	\sum_{v' \in \calV} 
	\frac{p(v) e^{\sprod{x}{v}}}{\kappa(x)} 
	\cdot
	\frac{p(v') e^{\sprod{x}{v'}}}{\kappa(x)} 
	\sprod{u}{v'},
\]
we may write
\begin{equation}
	\label{eq:1}
	B_x(u, u) = \frac{1}{2} \sum_{v, v'} 
	\frac{p(v) e^{\sprod{x}{v}}}{\kappa(x)}
	\cdot 
	\frac{p(v') e^{\sprod{x}{v'}}}{\kappa(x)}
	\sprod{u}{v - v'}^2.
\end{equation}
In particular, if the random walk is irreducible then the quadratic form $B_x$ is positive definite.
\begin{example}
	\label{ex:1}
	Let $p$ be the transition function of the simple random walk on $\ZZ^d$, i.e.
	\[
		p(e_j) = p(-e_j) = \frac{1}{2d}, \quad \text{for} \quad j = 1, \ldots, d.
	\]
	Thus
	\[
		\calV = \{ \pm e_j : j =1, \ldots, d\},
	\quad\text{and}\quad
		\calM = \big\{x \in \RR^d : \norm{x}_1 < 1\big\}.
	\]
	Since
	\[
		\kappa(z) = \frac{1}{2d} \sum_{j=1}^d \big(e^{z_j} + e^{-z_j}\big),
	\]
	we get $\calU = \{0, (-\pi, -\pi, \ldots, -\pi)\}$.
	By a straightforward calculation we may find the quadratic form $B_0$,
	\[	
		B_0(u, u)
		=
		\frac{1}{(2 d)^2}
		\sum_{j=1}^d \sum_{j' = 1}^d (u_j + u_{j'})^2 + (u_j - u_{j'})^2
		=
		\frac{1}{d} \sprod{u}{u}.
	\]
\end{example}

\subsection{Function $s$}
For the sake of completeness we provide the proof of the following well-known theorem.
\begin{theorem}
	\label{th:1}
	For every $\delta \in \calM$ a function $f(\delta,\:\cdot\:): \RR^d \rightarrow \RR$ defined by
	$$
	f(\delta, x) = \sprod{x}{\delta} - \log \kappa(x)
	$$
	attains its maximum at the unique point $s \in \RR^d$ satisfying $\nabla \log \kappa(s) = \delta$.
\end{theorem}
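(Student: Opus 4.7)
The plan rests on two observations: that $\log \kappa$ is smooth and strictly convex on $\RR^d$, and that $\delta$ being in the \emph{interior} of $\conv(\calV)$ forces $f(\delta,\:\cdot\:)$ to be coercive (i.e., tends to $-\infty$ at infinity).

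First I would record strict convexity. From \eqref{eq:1} the Hessian of $\log \kappa$ at any $x$ equals $B_x$, which is nonneg definite, and is zero on $u$ only if every difference $v-v'$ with $v,v' \in \calV$ is orthogonal to $u$. Since the interior $\calM$ of $\conv(\calV)$ is nonempty (as shown via the vectors in \eqref{eq:3}), the set $\{v-v' : v,v' \in \calV\}$ spans $\RR^d$, so $B_x$ is positive definite for every $x$. Consequently $x \mapsto \log \kappa(x)$ is strictly convex, and hence $f(\delta,\:\cdot\:)$ is strictly concave.

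Next I would establish coercivity. Fix a direction $u \in \RR^d$ with $\norm{u}_2=1$ and let $v^\star(u) \in \calV$ maximise $\sprod{v}{u}$. Keeping only the term $v=v^\star(u)$ in $\kappa(tu)$ gives
\[
  \log \kappa(tu) \geq \log p\big(v^\star(u)\big) + t\sprod{v^\star(u)}{u},
\]
hence
\[
  f(\delta, tu) \leq -\log p\big(v^\star(u)\big) - t\big(\sprod{v^\star(u)}{u} - \sprod{\delta}{u}\big).
\]
Since $\delta$ lies in the interior of $\conv(\calV)$, the support function $u \mapsto \sprod{v^\star(u)}{u} - \sprod{\delta}{u}$ is strictly positive on the unit sphere, and by compactness it is bounded below by some $c > 0$. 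Therefore $f(\delta, tu) \to -\infty$ uniformly in $u$ as $t \to +\infty$, so $f(\delta,\:\cdot\:)$ attains its supremum at some point $s \in \RR^d$. Setting $\nabla_x f(\delta, s) = 0$ yields $\nabla \log \kappa(s) = \delta$.

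Finally, uniqueness follows from strict concavity of $f(\delta,\:\cdot\:)$: two distinct maximisers would contradict strict inequality along the segment joining them. The main obstacle is the coercivity step; everything else reduces to a first-order condition and convexity. The only subtlety there is to obtain the estimate \emph{uniformly} in the direction $u$, which is why I invoke compactness of the sphere together with the fact that membership of $\delta$ in the \emph{open} convex hull makes the gap $\sprod{v^\star(u)}{u} - \sprod{\delta}{u}$ stay away from zero.
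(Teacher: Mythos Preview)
Your argument is correct. Both your proof and the paper's proof proceed by establishing strict concavity of $f(\delta,\cdot)$ via positive definiteness of $B_x$, and then coercivity, so the overall architecture matches.

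The coercivity steps differ in organisation. The paper first reduces to the case $\nabla\kappa(0)=0$, then writes $\delta$ as a convex combination $\delta=\sum_{j=1}^d t_j v_j$ with $v_j\in\partial\calM\cap\calV$ and $\sum_j t_j<1$, and from the bounds $\log\kappa(x)\geq\log p(v_j)+\sprod{x}{v_j}$ deduces
\[
f(\delta,x)\leq\Big(\sum_{j=1}^d t_j-1\Big)\log\kappa(x)-\sum_{j=1}^d t_j\log p(v_j),
\]
which tends to $-\infty$ because $\log\kappa(x)\to+\infty$. Your argument is more direct: you use the support-function inequality $\log\kappa(tu)\geq\log p(v^\star(u))+t\sprod{v^\star(u)}{u}$ with a direction-dependent $v^\star(u)$, and compactness of the unit sphere to get a uniform positive lower bound on the gap $\sprod{v^\star(u)}{u}-\sprod{\delta}{u}$. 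This avoids both the preliminary reduction and the barycentric decomposition; it is the standard Legendre-duality argument, and arguably cleaner. The paper's version, in exchange, makes explicit that the rate of decay of $f$ is governed by the coefficient $1-\sum_j t_j$, which loosely measures how far $\delta$ sits from $\partial\calM$.
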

\begin{proof}
	Without loss of generality, we may assume $\nabla \kappa(0)=0$. Indeed, otherwise we will consider
	\[
		\tilde{\kappa}(z)
		= e^{-\sprod{z}{v_0}} \kappa(z) 
		= \sum_{v \in \tilde{\calV}} p(v + v_0) e^{\sprod{z}{v}}
	\]
	where $v_0 = \nabla \kappa(0)$ and $\tilde{\calV} = \calV - v_0$. Then
	$\tilde{\calM}$, the interior of the convex hall of $\tilde{\calV}$, is equal to $\calM - v_0$.  For
	$\tilde{\delta} = \delta - v_0$ we have
	\[
		\tilde{f}(\tilde{\delta}, x) = \sprod{x}{\delta - v_0} - \log \tilde{\kappa}(x) 
		=
		\sprod{x}{\delta} - \log \kappa(x)
		=
		f(\delta, x).
	\]
	We conclude that if $s$ is the unique maximum of $\RR^d \ni x \mapsto \tilde{f}(\tilde{\delta}, x)$, then 
	it is also the unique maximum of $\RR^d \ni x \mapsto f(\delta, x)$. Because
	\[
		\nabla \log \tilde{\kappa}(x) = \nabla \log \kappa(x) - v_0
	\]
	we get $\nabla \log \kappa(s) = \tilde{\delta} + v_0 = \delta$, proving the claim.
	
	Fix $\delta \in \calM$. Since $\nabla \kappa(0) = 0$, by Taylor's theorem we have
	\[
		f(\delta, x) =  \sprod{x}{\delta} + \mathcal{O}(\norm{x}_2^2)
	\]
	as $\norm{x}_2$ approaches zero. Moreover, for any $x, u \in \RR^d$ 
	\[
		D_u^2 f(\delta, x) = -B_x(u, u),
	\]
	thus the function $\RR^d \ni x \mapsto f(\delta, x)$ is strictly concave.

	Let us observe that 
	\[
		0 = \nabla \kappa(0) = \sum_{v \in \calV} p(v) \cdot v \in \overline{\calM}.
	\]
	Since $\calM$ is not empty, the set $\calV$ cannot be contained in an affine hyperplane, thus, $0 \in \calM$.
	
	Now, $\delta \in \calM$ implies that there are $v_1, \ldots, v_d \in \partial \calM \cap \calV$ such that $\delta$
	belongs to the convex hull of $\{0, v_1, \ldots, v_d\}$, i.e. there are $t_0, t_1, \ldots t_d \in [0, 1]$
	satisfying
	\[
		\delta = t_0 \cdot 0 + \sum_{j=1}^d t_j \cdot v_j = \sum_{j=1}^d t_j \cdot v_j,
	\]
	Because $\delta \not \in \partial \calM$ we must have $t_0 > 0$, thus $\sum_{j = 1}^d t_j < 1$. Hence,
	\[
		\sum_{j=1}^d t_j \log \kappa(x) 
		\geq \sum_{j=1}^d t_j \big(\log p(v_j) + \sprod{x}{v_j}\big)
		=\sum_{j=1}^d t_j \log p(v_j) + \sprod{x}{\delta},
	\]
	and we get
	\[
		f(\delta, x) = 
		\sprod{x}{\delta} - \log \kappa(x)
		\leq \Big(\sum_{j=1}^d t_j - 1\Big) \log \kappa(x) - \sum_{j=1}^d t_j \log p(v_j),
	\]
	which implies that
	\[
		\lim_{\norm{x}_2 \to \infty} f(\delta, x) = -\infty,
	\]
	because
	\[
		\lim_{\norm{x}_2 \to \infty} \log \kappa(x) = +\infty,
	\]
	and the proof is finished.
\end{proof}
In the rest of the article, given $\delta \in \calM$ by $s$ we denote the unique solution to
\begin{equation}
	\label{eq:16}
	\delta = \nabla \log \kappa(s) = \sum_{v \in \calV} \frac{p(v) e^{\sprod{s}{v}}}{\kappa(s)} \cdot v.
\end{equation}
Let $\phi: \mathcal{M} \rightarrow \mathbb{R}$ be defined by
\begin{equation}
	\label{eq:30}
	\phi(\delta) = \max\{\sprod{x}{\delta} - \log \kappa(x) : x \in \RR^d\},
\end{equation}
thus, by Theorem \ref{th:1},
\begin{equation}
	\label{eq:19}
	\phi(\delta) = \sprod{\delta}{s} - \log \kappa(s).
\end{equation}
By \eqref{eq:16}, for any $u \in \RR^d$,
\[ 
	\sprod{\delta}{u} = D_u \log \kappa(s).
\]
Hence, for $u, u' \in \RR^d$
\[
	\sprod{u}{u'} 
	=D_u \big(D_{u'} \log \kappa(s) \big)
	=\sum_{j = 1}^d D_j D_{u'} \log \kappa(s) D_u s_j
	= B_s(D_u s, u'),
\]
i.e, $D_u s = B_s^{-1} u$. Therefore, we can calculate
\[ 
	\nabla \phi(\delta) 
	= s + \sum_{j = 1}^d \delta_j \nabla s_j - \sum_{j =1}^d D_j \log \kappa(s) \nabla s_j
	= s,
\]
thus, 
\[
	D^2_u \phi(\delta) = D_u \big( \sprod{u}{s} \big) = B_s^{-1}(u,u).
\]
In particular, $\phi$ is a convex function on $\calM$. Let $\delta_0 = \nabla \log \kappa(0)$. By Taylor's theorem,
we have
\begin{equation}
	\label{eq:10}
	\phi(\delta) = \frac{1}{2} B_0^{-1}(\delta - \delta_0, \delta-\delta_0) +
	\mathcal{O}(\norm{\delta-\delta_0}_2^3)
\end{equation}
as $\delta$ approaches $\delta_0$. We claim that
\begin{claim}
	For all $\delta \in \calM$
	\footnote{$A \asymp B$ means that $c B \leq A \leq C B$, for some constants $c,C>0$.}
	\[
		\phi(\delta) \asymp B_0^{-1}(\delta - \delta_0, \delta - \delta_0).
	\]
\end{claim}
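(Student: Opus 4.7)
The plan is to combine the local Taylor expansion \eqref{eq:10} with a convexity argument away from $\delta_0$. Since $B_0$ is positive definite (by irreducibility and \eqref{eq:1}), so is $B_0^{-1}$; therefore \eqref{eq:10} yields $r > 0$ such that
\[
    \tfrac{1}{4} B_0^{-1}(\delta - \delta_0, \delta - \delta_0)
    \leq \phi(\delta)
    \leq B_0^{-1}(\delta - \delta_0, \delta - \delta_0)
\]
for $\norm{\delta - \delta_0}_2 < r$. Outside this ball the claim will reduce to showing that both $\phi(\delta)$ and $B_0^{-1}(\delta - \delta_0, \delta - \delta_0)$ are trapped between positive constants. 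The bounds on $B_0^{-1}(\delta - \delta_0, \delta - \delta_0)$ are immediate from the positive-definiteness of $B_0^{-1}$ together with the boundedness of $\calM$ (which follows from $\calV$ being finite), so the entire task is to control $\phi$.

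For the lower bound $\phi \geq c > 0$ on $\{\norm{\delta - \delta_0}_2 \geq r\} \cap \calM$, I would start from the sphere $\{\norm{\delta - \delta_0}_2 = r\} \subset \calM$, where \eqref{eq:10} gives $\phi \geq c_2 > 0$, and then propagate this globally via convexity. Since $\phi(\delta_0) = -\log\kappa(0) = 0$, for $\delta$ with $R := \norm{\delta - \delta_0}_2 > r$ the convexity of $\phi$ applied with $t = r/R$ along the segment from $\delta_0$ to $\delta$ (which lies in $\calM$ by convexity of $\calM$) yields $\phi(\delta_0 + t(\delta - \delta_0)) \leq t\phi(\delta)$, hence $\phi(\delta) \geq (R/r) c_2 \geq c_2$.

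The main obstacle is the uniform upper bound $\phi \leq M$ on $\overline{\calM}$, because \emph{a priori} $\phi$ might blow up as $\delta \to \partial \calM$, since $\norm{s}_2 \to \infty$ there as noted after Theorem~\ref{th:3}. I would resolve this via the dual characterization: for any probability measure $q$ on $\calV$ with $\sum_v q(v) v = \delta$, Jensen's inequality applied to the concavity of $\log$ gives
\[
    \log \kappa(x) = \log \sum_v q(v) \cdot \frac{p(v)}{q(v)} e^{\sprod{x}{v}}
    \geq -\sum_v q(v) \log \frac{q(v)}{p(v)} + \sprod{x}{\delta},
\]
so $\phi(\delta) \leq \sum_v q(v) \log(q(v)/p(v)) \leq -\log \min_{v \in \calV} p(v) =: M$, using $q(v) \leq 1$. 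Since every $\delta \in \overline{\calM}$ admits such a representing $q$, the bound $\phi \leq M$ holds uniformly on $\calM$. Combining the three ingredients, $\phi(\delta) \asymp B_0^{-1}(\delta - \delta_0, \delta - \delta_0)$ globally on $\calM$.
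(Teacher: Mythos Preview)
Your argument is correct and follows the same overall architecture as the paper: use the Taylor expansion \eqref{eq:10} near $\delta_0$, propagate a positive lower bound by convexity of $\phi$ (using $\phi(\delta_0)=0$), and pin everything down by a uniform upper bound $\phi\leq M$ on $\calM$. The only substantive difference is in how the upper bound is obtained. The paper chooses, for each $\delta$, a vertex $v_0\in\calV$ maximizing $\sprod{s}{\cdot}$ and uses $\kappa(s)\geq p(v_0)e^{\sprod{s}{v_0}}$ together with $\sprod{s}{\delta}\leq \sprod{s}{v_0}$ (which follows from $\delta$ being a convex combination of the $v$'s with weights $p(v)e^{\sprod{s}{v}}/\kappa(s)$) to get directly $\phi(\delta)\leq -\log p(v_0)$. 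You instead introduce an auxiliary probability measure $q$ on $\calV$ with barycentre $\delta$ and apply Jensen's inequality, obtaining $\phi(\delta)\leq \sum_v q(v)\log\big(q(v)/p(v)\big)\leq -\log\min_{v}p(v)$. Both routes land on the same numerical bound; yours is the standard large-deviations/entropy variational identity (and, in fact, the infimum over representing $q$ recovers $\phi$ exactly), while the paper's is slightly more elementary since it avoids auxiliary measures. One small point to tighten: when you write $p(v)/q(v)$, either note that on $\calM$ one can always choose $q$ with $q(v)>0$ for every $v\in\calV$ (mix any representing measure with $p$), or simply restrict the sum to $\{v:q(v)>0\}$ and use $\kappa(x)\geq \sum_{q(v)>0} p(v)e^{\sprod{x}{v}}$ before applying Jensen.
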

Since $\phi$ is convex and satisfies \eqref{eq:10}, it is enough to show that $\phi$ is bounded from above.
Given $\delta \in \calM$, let $v_0 \in \calV$ be any vector satisfying
\[
	\sprod{s}{v_0} = \max\big\{\sprod{s}{v} : v \in \calV\big\}.
\]
Because
$$
\sprod{s}{\delta} - \sprod{s}{v_0} =
\sum_{v \in \mathcal{V}}
\frac{p(v) e^{\sprod{s}{v}} }{\kappa(s)}\sprod{s}{v-v_0} \leq 0,
$$
we get 
\begin{align*}
	\phi(\delta) = \sprod{s}{\delta} - \log \kappa(s) 
	& \leq \sprod{s}{\delta} - \log \big(p(v_0) e^{\sprod{s}{v_0}}\big) \\
	& \leq -\log p(v_0),
\end{align*}
proving the claim.

\begin{example}
	Let $p$ be the transition density of the simple random walk on $\ZZ$. Then $\calV = \{-1, 1\}$,
	$\calU = \{0, -\pi\}$ and $\calM = (-1, 1)$. For $\delta \in \calM$, we have
	\[
		e^s = \sqrt{\frac{1 + \delta}{1 - \delta}},
	\]
	and
	\[
		\kappa(s) = \frac{e^s + e^{-s}}{2} = \frac{1}{\sqrt{1-\delta^2}}.
	\]
	Hence, using \eqref{eq:19} we obtain
	\[
		\phi(\delta) = \frac{1}{2} (1-\delta) \log (1-\delta) + \frac{1}{2}(1+\delta) \log (1+\delta).
	\]
\end{example}
In general, there is no explicit formula for the function $\phi$. By implicit function theorem, the function
$s$ is real analytic on $\calM$. In particular, $s$ is bounded on any compact subset of $\calM$. From the other
side, $\norm{s}_2$ approaches infinity when $\delta$ tends to $\partial \calM$.
To see this, denote by $\calF$ a facet of $\calM$ such that $\delta$ approaches $\partial \calM \cap \calF$. Let
$u$ be an outward unit normal vector to $\calM$ at $\calF$. Then for each $v_1 \in \calF \cap \calV$ and 
$v_2 \in \calV \setminus \calF$ we have
\begin{align*}
	\sprod{v_1 - \delta}{u} 
	& = \sum_{v \in \calV} \frac{p(v) e^{\sprod{s}{v}}}{\kappa(s)} \sprod{v_1 - v}{u} \\
	& = \sum_{v \in \calV \setminus \calF} \frac{p(v) e^{\sprod{s}{v}}}{\kappa(s)} \sprod{v_1 - v}{u}
	\geq \frac{p(v_2) e^{\sprod{s}{v_2}}}{\kappa(s)} \sprod{v_1 - v_2}{u}.
\end{align*}
Therefore, for any $v \in \calV \setminus \calF$
\begin{equation}
	\label{eq:7}
	\lim_{\delta \to \partial \calM \cap \calF} \frac{e^{\sprod{s}{v}}}{\kappa(s)} = 0.
\end{equation}
The next theorem provides a control over the speed of convergence in \eqref{eq:7}. 
\begin{theorem}
	\label{th:2}
	There are constants $\eta \geq 1$ and $C > 0$ such that for all $\delta \in \mathcal{M}$, and $v \in \mathcal{V}$
	we have
	\[
		\frac{e^{\sprod{s}{v}}}{\kappa(s)}
		\geq C \dist(\delta, \partial\mathcal{M})^\eta
	\]
	where $s=s(\delta)$ satisfies $\delta = \nabla \log \kappa(s)$. 
\end{theorem}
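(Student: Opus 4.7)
My plan is to reduce the theorem to a logarithmic bound
\[
\norm{s(\delta)}_2 \leq C_1 \log\bigl(1/\dist(\delta, \partial \calM)\bigr) + C_2,
\]
which, combined with a crude pointwise estimate, immediately yields the theorem. The crude estimate comes from $\kappa(s) \leq \abs{\calV} \cdot \max_{v'} p(v') \cdot e^{\norm{s}_2 \max_{v'} \norm{v'}_2}$ together with $e^{\sprod{s}{v}} \geq e^{-\norm{s}_2 \norm{v}_2}$, giving
\[
\frac{e^{\sprod{s}{v}}}{\kappa(s)} \geq c_0 \, e^{-D \norm{s}_2}, \qquad D := 2 \max_{v' \in \calV} \norm{v'}_2.
\]
If the displayed logarithmic bound holds, the theorem follows with $\eta := \max(D C_1, 1)$ and a constant $C$ depending on $c_0$, $D$, $C_2$.

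The bulk of the work is then the logarithmic bound, which is equivalent to the statement: there exist uniform constants $\alpha_0, C_3 > 0$ such that $\dist(\delta(s), \partial \calM) \leq C_3 e^{-\alpha_0 \norm{s}_2}$ for all $s \in \RR^d$. The key geometric input is that $\overline{\calM}$ is a convex polytope with finitely many proper faces, so the quantity
\[
\alpha_\calF := \min_{v \in \calV \setminus \calF} \dist\bigl(v, \operatorname{aff}(\calF)\bigr)
\]
is strictly positive for each face $\calF$, and $\alpha_0 := \min_\calF \alpha_\calF > 0$. For a direction $n = s/\norm{s}_2$ in the relative interior of the normal cone of a face $\calF$, expanding $\delta(s) = \nabla \log \kappa(s)$ in powers of $e^{-\norm{s}_2}$ gives $\dist(\delta(s), \calF) \leq C e^{-\norm{s}_2 \alpha(n)}$ with $\alpha(n) := \min_{v \notin \calF}(M(n) - \sprod{n}{v})$ and $M(n) := \max_{v' \in \calV} \sprod{n}{v'}$; when $n$ is in the interior of its cone, $\alpha(n) \geq \alpha_\calF$.

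The main obstacle is that $n$ may lie close to the boundary of the normal cone of $\calF$, so that $\alpha(n)$ becomes small and the above estimate weakens. To handle this, one decomposes $s = s_\perp + s_\parallel$ with respect to a strictly larger face $\calF' \supsetneq \calF$ whose normal cone $n$ actually approaches: the normal component $s_\perp$ forces $\dist(\delta(s), \operatorname{aff}(\calF')) \leq C' e^{-\norm{s_\perp}_2 \alpha_{\calF'}}$ with $\alpha_{\calF'} \geq \alpha_0$, while the tangential component $s_\parallel$ plays the role of the Gibbs parameter for the lower-dimensional random walk supported on $\calV \cap \calF'$. A contradiction argument using the compactness of $S^{d-1}$ and the finiteness of the face lattice of $\overline{\calM}$ then produces the required uniform constants $\alpha_0, C_3$. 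I expect the multi-scale asymptotic expansion of $\delta(s)$ as $\norm{s}_2 \to \infty$, relative to the normal fan of $\overline{\calM}$, to be the hardest step.
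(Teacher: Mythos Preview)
Your reduction to the logarithmic bound $\norm{s(\delta)}_2 \leq C_1 \log(1/\dist(\delta, \partial\calM)) + C_2$ is valid and, once established, does give the theorem via your crude pointwise estimate. The paper, however, takes a more direct route that avoids both the detour through $\norm{s}_2$ and your proposed multi-scale normal-fan argument. Rather than partitioning the unit sphere by the normal fan of $\overline{\calM}$, the paper partitions it by the finitely many total orderings of $\calV$ induced by $\omega \mapsto \sprod{\omega}{v}$: for each enumeration $\calV = \{v_1,\ldots,v_N\}$ the region $\Omega = \{\omega : \sprod{\omega}{v_1} \geq \cdots \geq \sprod{\omega}{v_N}\}$ is \emph{closed}, hence compact, and on it the gap $\sprod{\omega}{v_1 - v_k}$ (with $k$ the first index so that $\{v_1,\ldots,v_k\}$ lie on no common facet) is continuous and strictly positive, so has a uniform lower bound $\epsilon > 0$. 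This single compactness step replaces all of your multi-scale machinery; in fact it already yields $\dist(\delta,\partial\calM)\leq C e^{-\epsilon\norm{s}_2}$, which is your logarithmic bound. The paper then bounds each ratio $e^{\sprod{s}{v_j}}/\kappa(s)$ directly by a power of $\dist(\delta,\partial\calM)$ via the identity $e^{\sprod{s}{v_j - v_1}} = (e^{\sprod{s}{v_k - v_1}})^{\sprod{s}{v_1 - v_j}/\sprod{s}{v_1 - v_k}}$, never passing through $\norm{s}_2$ at all. Incidentally, your assertion that $\alpha(n) \geq \alpha_\calF$ whenever $n$ lies in the interior of the normal cone of $\calF$ is false for faces of codimension greater than one (for the simple walk on $\ZZ^2$ with $\calF=\{e_1\}$ one has $\alpha(n) = n_1 - \abs{n_2} \to 0$ as $n \to (1,1)/\sqrt{2}$, whereas $\alpha_\calF = \sqrt{2}$); you are evidently aware of this, since it is precisely the ``main obstacle'' you describe next, but it means the decomposition $s = s_\perp + s_\parallel$ is carrying the entire load, and it is exactly that load that the paper's ordering trick sidesteps.
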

\begin{proof}
	We consider any enumeration of elements of $\calV = \{v_1, \ldots, v_N\}$. Define
	$$
	\Omega = \big\{\omega \in \mathcal{S}:
		\sprod{\omega}{v_{i}} \geq \sprod{\omega}{v_{i+1}} \text{ for } i=1,\ldots,N-1 \big\}
	$$
	where $\mathcal{S}$ is the unit sphere in $\RR^d$ centred at the origin. Suppose
	$\Omega \neq \emptyset$ and let $k$ be the smallest index such that points $\{v_1, \ldots, v_k\}$ do not lay on the
	same facet of $\calM$. Let us recall that a set $\calF$ is a facet of $\calM$ if there is $\lambda \in \mathcal{S}$
	and $c \in \RR$ such that for all $v \in \calV$, $\sprod{\lambda}{v} \leq c$, and
	\[
		\calF = \conv\{v \in \calV : \sprod{\lambda}{v} = c\}.
	\]
	Since $\{v_1, \ldots, v_k\}$ do not lay on the same facet of $\calM$ and $\Omega$ is a compact set, there is 
	$\epsilon > 0$ such that for all $\omega \in \Omega$ we have
	\begin{equation}
		\label{eq:24}
		\sprod{\omega}{v_1} \geq \sprod{\omega}{v_k} + \epsilon.
	\end{equation}
	Let $\calF$ be a facet containing $\{v_1,\ldots,v_{k-1}\}$. For $\frac{x}{\norm{x}_2} \in \Omega$ and
	$$
	\delta = \sum_{v\in\mathcal{V}} \frac{p(v) e^{\sprod{x}{v}}}{\kappa(x)} \cdot v,
	$$
	we have
	\begin{align*}
		\dist(\delta, \partial \mathcal{M}) 
		\leq 
		\sprod{\lambda}{v_1 - \delta}
		& = 
		\sum_{v\in\mathcal{V}\setminus\mathcal{F}} 
		\frac{p(v) e^{\sprod{x}{v}}}{\kappa(x)}
		\sprod{\lambda}{v_1-v} \\
		& \leq 
		C \frac{e^{\sprod{x}{v_k}}}{\kappa(x)}.
	\end{align*}
	Since
	\[
		p(v_1) e^{\sprod{x}{v_1}} \leq \kappa(x) \leq e^{\sprod{x}{v_1}}
	\]
	we obtain 
	\[
		\dist(\delta, \partial \calM) \leq C e^{\sprod{x}{v_k - v_1}}.
	\]
	In particular, for $1 \leq j \leq k$, we have
	\[
		\frac{e^{\sprod{x}{v_j}}}{\kappa(x)} \geq C \dist(\delta, \partial \calM).
	\]
	If $j > k$, we can estimate
	\begin{align*}
		\frac{e^{\sprod{x}{v_j}}}{\kappa(x)}
		\geq
		e^{\sprod{x}{v_j - v_1}} 
		& = 
		\Big(e^{\sprod{x}{v_k - v_1}}\Big)^{\sprod{x}{v_1 - v_j}/\sprod{x}{v_1 - v_k}} \\
		& \geq C \dist(\delta, \partial\mathcal{M})^{\sprod{x}{v_1-v_j}/\sprod{x}{v_1 - v_k}}
	\end{align*}
	what finishes the proof since by \eqref{eq:24}
	\[
		1 \leq \frac{\sprod{x}{v_1 - v_j}}{\sprod{x}{v_1 - v_k}} \leq \epsilon^{-1} \norm{v_1 - v_j}_2.
		\qedhere
	\]
\end{proof}

\subsection{Analytic lemmas}
For a multi-index $\sigma \in \mathbb{N}^d$ we denote by $X_\sigma$ a multi-set containing
$\sigma(i)$ copies of $i$. Let $\Pi_\sigma$ be a set of all partitions of $X_\sigma$.
For the convenience of the reader we recall the following lemma.

\begin{lemma}[Fa\`a di Bruno's formula]
	\label{lem:1}
	There are positive constants $c_\pi$, $\pi \in \Pi_\sigma$, such that for sufficiently smooth
	functions $f: S \rightarrow T$, $F: T \rightarrow \mathbb{R}$, $T \subset \mathbb{R}$,
	$S \subset \mathbb{R}^d$, we have
	$$
	\der{\sigma} F(f(s)) = \sum_{\pi \in \Pi_\sigma} c_\pi \left.
	\frac{{\rm d}^m }{{\rm d} t^m} \right|_{t = f(s)} F(t)
	\prod_{j=1}^{m} \der{B_j} f(s)
	$$
	where $\pi = \{B_1, \ldots, B_m\}$.
\end{lemma}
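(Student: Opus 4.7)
The natural approach is induction on the order $|\sigma| = \sigma(1) + \cdots + \sigma(d)$. The base case $|\sigma| = 1$ is just the ordinary chain rule $\partial_i F(f(s)) = F'(f(s)) \partial_i f(s)$, which matches the unique partition of the singleton multiset $X_{e_i}$ with coefficient $c_\pi = 1$; no work is required beyond unwinding the definitions.

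For the inductive step, I would apply $\partial_i$ to both sides of the formula for $\sigma$ and expand using the one-variable chain rule on $F^{(m)}(f(s))$ together with the Leibniz product rule on the product $\prod_{j=1}^m \der{B_j} f(s)$. A typical summand indexed by $\pi = \{B_1, \ldots, B_m\} \in \Pi_\sigma$ then contributes two families of terms. First, differentiating the factor $F^{(m)}(f(s))$ produces $F^{(m+1)}(f(s)) \cdot \partial_i f(s) \cdot \prod_{j=1}^m \der{B_j} f(s)$, which is precisely the summand corresponding to the partition $\pi \cup \{\{i\}\} \in \Pi_{\sigma+e_i}$ obtained by adjoining $i$ as a new singleton block. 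Second, for each $k \in \{1,\ldots,m\}$, differentiating the factor $\der{B_k} f(s)$ produces a term identical to the original but with $\der{B_k} f(s)$ replaced by $\der{B_k \cup \{i\}} f(s)$, and this corresponds to the partition of $X_{\sigma+e_i}$ obtained by inserting $i$ into the block $B_k$.

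The crux is the combinatorial observation that every partition of $X_{\sigma+e_i}$ arises from exactly one partition of $X_\sigma$ via one of these two operations — either one splits off the newly added $i$ as its own block, or one lumps it into an existing block. Consequently, after grouping the differentiated terms by their resulting partition, the right-hand side becomes a sum over $\Pi_{\sigma+e_i}$ of expressions of the prescribed shape, with new coefficients that are sums of previous $c_\pi$ and therefore automatically positive. This closes the induction and simultaneously delivers positivity of the constants.

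The only obstacle I foresee is purely notational: one has to treat $X_\sigma$ consistently as a multiset so that adding a copy of the label $i$ corresponds precisely to differentiating by $\partial_i$, and so that ``inserting $i$ into $B_k$'' is interpreted as the multiset union $B_k \cup \{i\}$ (which may already contain copies of $i$). Once this bookkeeping is pinned down, every manipulation reduces to a routine application of the Leibniz and chain rules, and smoothness of $F$ and $f$ guarantees that all derivatives exist and commute as needed.
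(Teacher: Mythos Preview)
The paper does not prove this lemma at all; it is simply stated as a recalled fact. Your inductive argument is the standard route and is essentially sound.

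One point needs tightening. Your claim that ``every partition of $X_{\sigma+e_i}$ arises from \emph{exactly one} partition of $X_\sigma$ via one of these two operations'' is false with the paper's multiset conventions. For example, take $\sigma = 2e_1$: the partition $\{\{1,1\},\{1\}\}$ of $X_{3e_1}$ is produced both from $\{\{1,1\}\}$ (by adjoining $\{1\}$ as a new block) and from $\{\{1\},\{1\}\}$ (by inserting $1$ into either block; the Leibniz rule even counts this twice). This does not break the argument --- after grouping, each new coefficient $c_{\pi'}$ is still a positive integer combination of old $c_\pi$'s, and every $\pi' \in \Pi_{\sigma+e_i}$ is reached at least once (remove one copy of $i$ from any block containing it). But the clean bijection you assert holds only if you temporarily label the copies of each index, work with set partitions, and pass to multiset types at the end; that is probably the tidiest way to handle the bookkeeping you flag in your final paragraph.
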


For a multi-set $B$ containing $\sigma(i)$ copies of $i$ we set $B! = \sigma!$. Let us
observe that for
$$
F(t) = \frac{1}{2-t}, \quad \text{and}\quad f(s) = \prod_{j=1}^d \frac{1}{1 - s_j},
$$
the function $F(f(s))$ is real-analytic in some neighbourhood of $s = 0$. Hence, there is $C > 0$
such that for every $\sigma \in \mathbb{N}^d$
\begin{equation}
	\label{eq:22}
	\der{\sigma} F(f(0)) = 
	\sum_{\pi \in \Pi_\sigma} c_\pi m! \prod_{j=1}^m B_j! \leq C^{\abs{\sigma}+1} \sigma!.
\end{equation}
Using Lemma \ref{lem:1} one can show
\begin{lemma}
	\label{lem:2}
	Let $\calV \subset \RR^d$ be a set of finite cardinality. Assume that for each $v \in \calV$, 
	we are given $a_v \in \mathbb{C}$, and $b_v > 0$. Then for $z = x+i\theta \in \CC^d$ such that
	\[
		\norm{\theta}_2 \leq (2 \cdot \max\{\norm{v}_2: v \in \mathcal{V}\})^{-1},
	\]
	we have
	\begin{equation}
		\label{eq:11}
		\Big| \sum_{v \in \mathcal{V}} b_v e^\sprod{z}{v} \Big|
		\geq \frac{1}{\sqrt{2}} \sum_{v \in \mathcal{V}} b_v e^\sprod{x}{v}.
	\end{equation}
	Moreover, there is $C > 0$ such that for all $\sigma \in \mathbb{N}^d$
	\begin{equation}
		\label{eq:12}
		\bigg| \der{\sigma} \bigg\{ \frac{ \sum_{v \in \mathcal{V}} a_v e^\sprod{z}{v} }
		{ \sum_{v \in \mathcal{V}} b_v e^\sprod{z}{v} } \bigg\} \bigg| 
		\leq C^{\abs{\sigma}} \sigma! \frac{ \sum_{v \in \mathcal{V}} \abs{a_v} e^\sprod{x}{v} }
		{ \sum_{v \in \mathcal{V}} b_v e^\sprod{x}{v} }.
	\end{equation}
\end{lemma}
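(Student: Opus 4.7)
The plan is to handle the two assertions in sequence, deriving (ii) from (i) via Leibniz and Fa\`a di Bruno.

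For inequality \eqref{eq:11}, the assumption on $\theta$ combined with the Cauchy--Schwarz inequality yields
$\abs{\sprod{\theta}{v}} \leq \norm{\theta}_2 \norm{v}_2 \leq 1/2$ for every $v \in \calV$. I would then bound the modulus of the sum from below by its real part,
\[
	\Big\lvert \sum_{v} b_v e^{\sprod{x}{v}} e^{i \sprod{\theta}{v}} \Big\rvert
	\geq \sum_v b_v e^{\sprod{x}{v}} \cos \sprod{\theta}{v}
	\geq \cos(1/2) \sum_v b_v e^{\sprod{x}{v}},
\]
and use the elementary fact $\cos(1/2) \geq 1 - 1/8 \geq 1/\sqrt{2}$ (or simply $1/2 < \pi/4$). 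Since all $b_v > 0$, this gives \eqref{eq:11}.

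For \eqref{eq:12}, denote $\Phi(z) = \sum_v a_v e^{\sprod{z}{v}}$, $\Psi(z) = \sum_v b_v e^{\sprod{z}{v}}$, and write
\[
	\der{\sigma} \Big(\frac{\Phi}{\Psi}\Big) = \sum_{\tau \leq \sigma} \binom{\sigma}{\tau} \der{\tau} \Phi \cdot \der{\sigma - \tau} \Big(\frac{1}{\Psi}\Big).
\]
Set $R = \max\{\norm{v}_2 : v \in \calV\}$. Termwise differentiation gives the crude bounds
$\abs{\der{\tau} \Phi(z)} \leq R^{\abs{\tau}} \sum_v \abs{a_v} e^{\sprod{x}{v}}$ and $\abs{\der{B} \Psi(z)} \leq R^{\abs{B}} \sum_v b_v e^{\sprod{x}{v}}$. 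To control $\der{\sigma - \tau}(1/\Psi)$, apply Lemma \ref{lem:1} with outer function $F(t) = 1/t$ evaluated at $t = \Psi(z)$ and inner function $f = \Psi$, which yields
\[
	\der{\sigma - \tau} \Big(\frac{1}{\Psi}\Big) = \sum_{\pi \in \Pi_{\sigma - \tau}} c_\pi \frac{(-1)^m m!}{\Psi(z)^{m+1}} \prod_{j=1}^m \der{B_j} \Psi(z),
\]
where $\pi = \{B_1, \ldots, B_m\}$. Now I invoke part (i) to get $\abs{\Psi(z)} \geq 2^{-1/2} \sum_v b_v e^{\sprod{x}{v}}$, and plug in the product bound on $\abs{\der{B_j} \Psi}$, obtaining after the telescoping cancellation of the $\Psi_\RR$ factors
\[
	\Big\lvert \der{\sigma - \tau}\Big(\frac{1}{\Psi}\Big) \Big\rvert
	\leq \frac{R^{\abs{\sigma - \tau}} \, 2^{(\abs{\sigma} + 1)/2}}{\sum_v b_v e^{\sprod{x}{v}}} \sum_{\pi \in \Pi_{\sigma - \tau}} c_\pi m!.
\]

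The final step will be to replace $\sum_\pi c_\pi m!$ by an expression whose combinatorics is controlled by \eqref{eq:22}: since $\prod_j B_j! \geq 1$, one has $\sum_\pi c_\pi m! \leq \sum_\pi c_\pi m! \prod_j B_j! \leq C^{\abs{\sigma - \tau} + 1} (\sigma - \tau)!$. Inserting this in the Leibniz sum and using $\binom{\sigma}{\tau}(\sigma - \tau)! = \sigma!/\tau!$ reduces the whole estimate to showing $\sum_{\tau \leq \sigma} R^{\abs{\tau}}/\tau! \leq e^{dR}$, which is immediate. The main obstacle is purely bookkeeping: keeping the dependence on $m$ and on $\abs{\sigma}$ separate long enough to invoke \eqref{eq:22}, since a naive Cauchy-estimate approach would require controlling $1/\Psi$ on a polydisc and would force an unwanted dependence on $\theta$.
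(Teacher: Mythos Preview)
Your argument is correct. For part \eqref{eq:12} it is essentially the paper's own proof: both apply Fa\`a di Bruno with $F(t)=1/t$ to bound $\partial^{\sigma-\tau}(1/\Psi)$, invoke \eqref{eq:22} to control the combinatorial sum, and then close with Leibniz; the only difference is the order of exposition.

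Where you genuinely diverge is in \eqref{eq:11}. The paper squares the modulus and expands
\[
\Big|\sum_v b_v e^{\sprod{z}{v}}\Big|^2
=\sum_{v,v'} b_v b_{v'} e^{\sprod{x}{v+v'}}\cos\sprod{\theta}{v-v'},
\]
then uses $\cos t\ge 1-t^2/2$ together with $|\sprod{\theta}{v-v'}|\le 1$ to obtain the factor $1/2$. Your route---bounding the modulus by the real part and using $\cos\sprod{\theta}{v}\ge\cos(1/2)>1/\sqrt{2}$ directly---is shorter and avoids the double sum entirely. The paper's version has the minor advantage that the identity it writes down is reused later (see \eqref{eq:5}) when estimating $1-|\kappa(s+i\theta)/\kappa(s)|^2$, but for the lemma itself your argument is the cleaner one.
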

\begin{proof}
	We start by proving \eqref{eq:11}. We have
	\begin{align*}
		\Big| \sum_{v \in \calV} b_v e^{\sprod{z}{v}} \Big|^2
		& =
		\sum_{v, v' \in \calV} b_v b_{v'} e^{\sprod{x}{v + v'}} \cos\sprod{\theta}{v-v'} \\
		& \geq
		\sum_{v, v' \in \calV} b_v b_{v'} e^{\sprod{x}{v + v'}} \left(1 - \frac{\sprod{\theta}{v-v'}^2}{2}\right) \\
		& \geq
		\frac{1}{2}
		\Big( \sum_{v \in \calV} b_v e^{\sprod{x}{v}} \Big)^2 
	\end{align*}
	because $\abs{\sprod{\theta}{v-v'}} \leq 1$.

	For the proof of \eqref{eq:12}, it is enough to show
	\begin{equation}
		\label{eq:26}
		\bigg|\partial^{\sigma} \bigg\{ \frac{1}{\sum_{v \in \calV} b_v e^{\sprod{z}{v}}}\bigg\} \bigg|
		\leq
		C^{\abs{\sigma} + 1} \sigma! \frac{1}{\sum_{v \in \calV} b_v e^{\sprod{x}{v}}}.
	\end{equation}
	Indeed, since
	\begin{equation}
		\label{eq:28}
		\Big|
		\partial^\alpha \Big\{\sum_{v \in \calV} a_v e^{\sprod{z}{v}} \Big\}
		\Big|
		\leq
		\sum_{v \in \calV} 
		\abs{a_v} \cdot \abs{v^\alpha} e^{\sprod{x}{v}} 
		\leq
		C^{\abs{\alpha}} 
		\sum_{v \in \calV} \abs{a_v} e^{\sprod{x}{v}},
	\end{equation}
	by \eqref{eq:26} and the Leibniz's rule we obtain \eqref{eq:12}.
	To show \eqref{eq:26}, we use Fa\`a di Bruno's formula with $F(t) = 1/t$. By Lemma \ref{lem:1} together with
	estimates \eqref{eq:11} and \eqref{eq:28} we get
	\begin{align*}
		\bigg|
		\partial^{\sigma} 
		\bigg\{ \frac{1}{\sum_{v \in \calV} b_v e^{\sprod{z}{v}}} \bigg\}
		\bigg|
		&\leq
		\sum_{\pi \in \Pi_\sigma} c_\pi m! \Big(\sum_{v \in \calV} b_v e^{\sprod{x}{v}} \Big)^{-m-1}
		\prod_{j=1}^m \Big| \partial^{B_j}\Big\{\sum_{v \in \calV} b_v e^{\sprod{z}{v}} \Big\}\Big| \\
		&\leq
		C^{\abs{\sigma}}
		\frac{1}{\sum_{v \in \calV} b_v e^{\sprod{x}{v}}}
		\sum_{\pi \in \Pi_\sigma} c_\pi m! \\
		&\leq
		C^{\abs{\sigma} + 1}
		\frac{1}{\sum_{v \in \calV} b_v e^{\sprod{x}{v}}},
	\end{align*}
	where in the last inequality we have used \eqref{eq:22}.
\end{proof}

\section{Heat kernels}
\label{sec:2}
In this section we show the asymptotic behaviour of the $n$'th step transition density of an irreducible random walk
on the integer lattice $\ZZ^d$. Before we state and proof the main theorem, let us present the following example.
\begin{example}
	Let $p$ be the transition function of the simple random walk on $\ZZ$. If $x \equiv n \pmod 2$ then
	\[
		p(n; x) = \frac{1}{2^n} \frac{n!}{(\frac{n-x}{2})! (\frac{n+x}{2})!}. 
	\]
	Let us recall Stirling's formula
	\[
		n! = \sqrt{2 \pi} n^{n+\frac{1}{2}} e^{-n} \big(1 + \calO(n^{-1})\big).
	\]
	Hence, we have
	\begin{align*}
		p(n; x) 
		& = \frac{1}{\sqrt{2\pi}} \frac{n^{n+\frac{1}{2}}}{(n-x)^{\frac{n-x+1}{2}} (n+k)^{\frac{n+x+1}{2}}}
		\big(1 + \calO((n-x)^{-1}) + \calO((n+x)^{-1})\big) \\
		&= \frac{1}{\sqrt{2\pi n}} (1-\delta^2)^{-\frac{1}{2}} e^{-n\phi(\delta)}
		\big(1+\calO(n^{-1} \dist(\delta, \{-1, 1\})^{-1})\big)
	\end{align*}
	where $\delta = \frac{x}{n}$ and 
	$\phi(\delta) = \frac{1}{2} (1-\delta) \log (1-\delta) + \frac{1}{2} (1+\delta) \log (1+\delta)$.
\end{example}
\begin{theorem}
	\label{th:7}
	Let $p$ be an irreducible random walk on $\ZZ^d$. Let $r$ be its period and $X_0, \ldots, X_{r-1}$ the partition
	of $\ZZ^d$ into aperiodic classes. There is $\eta \geq 1$ such that for each $j \in \{0, 1, \ldots, r-1\}$,
	$n \in \NN$ and $x \in X_j$, if $n \equiv j \pmod r$ then
	\[
		p(n; x)
		=
		(2 \pi n)^{-\frac{d}{2}} (\det B_s)^{-\frac{1}{2}} e^{-n\phi(\delta)} 
		\big(
			r + \calO\big(n^{-1} \dist(\delta, \partial \mathcal{M})^{-2\eta}\big)
		\big),
	\]
	otherwise $p(n; x) = 0$, where $\delta = \frac{x}{n}$, $s = s(\delta)$ satisfies $\nabla \log \kappa(s) = \delta$,
	and
	\[
		\phi(\delta) = \max\big\{\sprod{u}{\delta} - \log \kappa(u) : u \in \RR^d \big\}.
	\]
\end{theorem}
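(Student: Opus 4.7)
The vanishing assertion $p(n;x)=0$ for $n\not\equiv j\pmod r$ with $x\in X_j$ is already contained in formula~\eqref{eq:6}, so only the case $n\equiv j\pmod r$, $x\in X_j$, requires analysis. My starting point is Fourier inversion,
\[
p(n;x) = \frac{1}{(2\pi)^d} \int_{[-\pi,\pi)^d} \kappa(i\theta)^n e^{-i\sprod{\theta}{x}} \dth,
\]
followed by a contour shift. Since $\kappa$ is entire on $\CC^d$ and the integrand is $2\pi$-periodic in each coordinate, I may replace $i\theta$ by $s+i\theta$, where $s=s(\delta)$ is the point from Theorem~\ref{th:1}. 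Using $\sprod{s}{x}=n\sprod{s}{\delta}$ and~\eqref{eq:19}, this produces the tilted formula
\[
p(n;x) = \frac{e^{-n\phi(\delta)}}{(2\pi)^d} \int_{[-\pi,\pi)^d} \Bigl(\frac{\kappa(s+i\theta)}{\kappa(s)}\Bigr)^n e^{-i\sprod{\theta}{x}} \dth,
\]
which is the object I would analyse by steepest descent.

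The set $\calU$ from~\eqref{eq:43} consists of the $r$ points $\theta^{(0)}=0,\theta^{(1)},\dots,\theta^{(r-1)}$ at which $\abs{\kappa(s+i\theta)/\kappa(s)}$ attains the value $1$; these are precisely the characters indexing the aperiodic classes, and the hypothesis $x\in X_j$, $n\equiv j\pmod r$ forces $\kappa(s+i\theta^{(\ell)})^n e^{-i\sprod{\theta^{(\ell)}}{x}}/\kappa(s)^n = 1$ for every $\ell$. I would partition the domain into small balls $B(\theta^{(\ell)},\rho)$ of some fixed radius $\rho$ together with the complement $A$. On each ball I would expand
\[
n\log\frac{\kappa(s+i\theta)}{\kappa(s)} - i\sprod{\theta}{x}
= -\tfrac{n}{2} B_s(\theta-\theta^{(\ell)},\theta-\theta^{(\ell)}) + n R_s(\theta-\theta^{(\ell)}),
\]
where $R_s$ is a cubic remainder whose derivatives are controlled by Lemma~\ref{lem:2}, and rescale via $\theta-\theta^{(\ell)} = n^{-1/2} B_s^{-1/2} \xi$. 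The Jacobian yields $n^{-d/2}(\det B_s)^{-1/2}$, the resulting Gaussian integral evaluates to $(2\pi)^{d/2}$, and the cubic term becomes $n^{-1/2}$ times a polynomial in $\xi$ whose coefficients are of order $\vnorm{B_s^{-1/2}}^3$. Summing the contributions of the $r$ balls produces the main term $r(2\pi n)^{-d/2}(\det B_s)^{-1/2} e^{-n\phi(\delta)}$.

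It remains to estimate the tail on $A$ and the size of the rescaled cubic remainder. Off $\calU$ one has $\abs{\kappa(s+i\theta)/\kappa(s)} \leq 1-c$, and from the representation~\eqref{eq:1} combined with the lower bound $p(v)e^{\sprod{s}{v}}/\kappa(s) \gtrsim \dist(\delta,\partial\calM)^{\eta}$ supplied by Theorem~\ref{th:2}, one obtains $c\gtrsim \dist(\delta,\partial\calM)^{2\eta}$ uniformly in $\delta\in\calM$; this makes the contribution of $A$ exponentially small and easily absorbed into the stated error once $n\dist(\delta,\partial\calM)^{2\eta}$ is large, while the bounded regime is trivial up to constants. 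The main obstacle is the joint control of two competing sources of $\delta$-dependence: as $\delta\to\partial\calM$ the quadratic form $B_s$ degenerates, so $\vnorm{B_s^{-1/2}}\to\infty$ and the Laplace expansion loses precision, while simultaneously the off-critical decay constant $c$ weakens. Theorem~\ref{th:2} is the single uniform tool that lets both effects be folded into one factor of $\dist(\delta,\partial\calM)^{-2\eta}$ in the error; tracking this exponent through the cubic remainder, the Jacobian, and the tail bound is the bookkeeping difficulty that makes the proof nontrivial.
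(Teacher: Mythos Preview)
Your outline follows the paper's route---Fourier inversion, tilt by $s$, split into balls around $\calU$ and their complement, Laplace analysis on each ball---and the rescaling $\theta-\theta^{(\ell)}=n^{-1/2}B_s^{-1/2}\xi$ is only a cosmetic variant of the paper's direct expansion of $e^{n\psi(s,\theta)}$. Two substantive points, however, are missing.

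The first is the one that makes the argument uniform in $\delta$. Your claim that the rescaled cubic has coefficients of order $\vnorm{B_s^{-1/2}}^3$ corresponds to the isotropic bound $\abs{\psi(s,\theta)}\leq C\norm{\theta}_2^3$; with that bound, the inequality $\abs{\psi(s,\theta)}\leq\tfrac14 B_s(\theta,\theta)$ needed to dominate $e^{n\psi}$ by the Gaussian holds only for $\norm{\theta}_2\lesssim\lambda_{\min}(B_s)$, so your ``fixed radius'' $\rho$ would have to shrink to $0$ as $\delta\to\partial\calM$, and the partition into balls and complement collapses. The paper's cure is the anisotropic estimate \eqref{eq:27}, $\abs{\psi(s,\theta)}\leq C\,\norm{\theta}_2\,B_s(\theta,\theta)$, which follows from Lemma~\ref{lem:2} via \eqref{eq:14} and after your rescaling contributes only a single factor $\vnorm{B_s^{-1/2}}$; crucially, it makes the radius in \eqref{eq:23} independent of $s$. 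The second omission is that you never dispose of the $n^{-1/2}$ term: the leading cubic $D_\theta^3\psi(s,0)$ is odd in $\theta$, so its Gaussian integral vanishes, and without this parity observation the error is $\calO(n^{-1/2})$ rather than the claimed $\calO(n^{-1})$. (Two minor points: for the off-critical bound the relevant identity is \eqref{eq:5}, not \eqref{eq:1}, and a separate compactness step---the paper's Claim~\ref{clm:2}---is needed to keep one sine factor uniformly away from zero on the complement; also the assertion $\abs{\calU}=r$ requires the short argument given at the start of the paper's proof.)
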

\begin{proof}
	Using the Fourier inversion formula we can write
	\begin{equation}
		\label{eq:37}
		p(n; x)=\bigg(\frac{1}{2\pi}\bigg)^d \int_{\mathscr{D}_d} 
		\kappa(i\theta)^n e^{-i\sprod{\theta}{x}} \dth
	\end{equation}
	where $\mathscr{D}_d = [-\pi, \pi)^d$. If $\theta_0 \in \calU$ then $\kappa(i \theta_0) = e^{it}$ for
	some $t \in [-\pi, \pi)$ where $\calU$ is defined in \eqref{eq:43}. Since $\kappa(i\theta_0)$ is a convex combination
	of complex numbers from the unit circle, $\kappa(i \theta_0) = e^{i t}$ if and only if
	$e^{i \sprod{\theta_0}{v}} = e^{it}$ for each $v \in \calV$. In particular,
	\begin{align*}
		e^{i n t} p(n; x) 
		& =
		\bigg(\frac{1}{2\pi}\bigg)^d \int_{\mathscr{D}_d}
        \kappa(i\theta + i \theta_0)^n e^{-i\sprod{\theta}{x}} \dth \\
		& =
		e^{i \sprod{\theta_0}{x}}
		p(n; x),
	\end{align*}
	thus, whenever $p(n; x) > 0$, we have
	\begin{equation}
		\label{eq:25}
		e^{i n t} = e^{i \sprod{\theta_0}{x}}.
	\end{equation}
	Hence, by \eqref{eq:6},
	\[
		e^{i n t } = e^{i \sprod{\theta_0}{x}} = e^{i (n + r) t},
	\]
	which implies that $e^{i t}$ is $r$'th root of unity. In particular, the set $\calU$ has the cardinality $r$.
	Next, we claim that
	\begin{claim}
		\label{clm:1}
		For any $u \in \RR^d$,
		\[
			\int_{\mathscr{D}_d} \kappa(i \theta)^n e^{-i \sprod{\theta}{x}} \dth
			=
			\int_{\mathscr{D}_d} \kappa(u + i\theta)^n e^{-\sprod{u + i \theta}{x}} \dth.
		\]
	\end{claim}
	To see this, we observe that for $y \in \ZZ^d$ we have
	\begin{align*}
		\int_{\mathscr{D}_d} e^{i \sprod{\theta}{y}} e^{-i\sprod{\theta}{x}} \dth
		& =
		\prod_{j = 1}^d
		\int_{-\pi}^\pi e^{i \theta_j y_j} e^{-i \theta_j x_j} {\: \rm d}\theta_j \\
		& =
		\prod_{j = 1}^d
		\int_{-\pi}^\pi e^{(u_j+i \theta_j) y_j} e^{-(u_j + i \theta_j) x_j} {\: \rm d}\theta_j \\
		& =
		\int_{\mathscr{D}_d} e^{\sprod{u + i\theta}{y}} e^{-\sprod{u + i\theta}{x}} \dth.
	\end{align*}
	Therefore,
	\begin{align*}
		\int_{\mathscr{D}_d} \kappa(i\theta)^n e^{-i\sprod{\theta}{x}} \dth
		& =
		\sum_{v_1, \ldots, v_n \in \calV} 
		\prod_{j=1}^n p(v_j)
		\int_{\mathscr{D}_d} e^{i\sprod{\theta}{\sum_{j=1}^n v_j}} e^{-i\sprod{\theta}{x}} \dth \\
		& =
		\sum_{v_1, \ldots, v_n \in \calV}
		\prod_{j=1}^n p(v_j)
		\int_{\mathscr{D}_d} e^{\sprod{u + i\theta}{\sum_{j=1}^n v_j}} e^{-\sprod{u+i\theta}{x}} \dth \\
		& =
		\int_{\mathscr{D}_d} \kappa(u + i \theta)^n e^{-\sprod{u+i\theta}{x}} \dth.
	\end{align*}

	We notice that if $p(n; x) > 0$ then $\delta = \frac{x}{n} \in \overline{\calM}$.
	Since $\dist(\delta, \partial \calM) > 0$, by Theorem \ref{th:1}, there is the unique $s=s(\delta)$ such that
	$\nabla \log \kappa(s) = \delta$. Hence, by Claim \ref{clm:1}, we can write
	\[
		p(n; x) = \bigg(\frac{1}{2 \pi}\bigg)^d e^{-n \phi(\delta)}
		\int_{\mathscr{D}_d} 
		\bigg(\frac{\kappa(s + i \theta)}{\kappa(s)}\bigg)^n e^{-i \sprod{\theta}{x}} \dth.
	\]
	Let $\epsilon > 0$ be small enough to satisfy \eqref{eq:2}, \eqref{eq:21} and \eqref{eq:23}. We set
	\[
		\mathscr{D}_d^\epsilon = \bigcap_{\theta_0 \in \calU} 
		\big\{\theta \in [-\pi, \pi)^d : \norm{\theta - \theta_0}_2 \geq \epsilon \big\}.
	\]
	Then the integral over $\mathscr{D}_d^\epsilon$ is negligible. To see this, we write
	\begin{align}
		\nonumber
		1 - \bigg\lvert \frac{\kappa(s+i\theta)}{\kappa(s)} \bigg\rvert^2 
		& = 
		1 - \sum_{v, v' \in \calV} \frac{p(v) e^{\sprod{s+i\theta}{v}}}{\kappa(s)} \cdot 
		\frac{p(v')e^{\sprod{s-i\theta}{v'}}}{\kappa(s)} \\
		\label{eq:5}
		& =
		2 \sum_{v, v' \in \calV} 
		\frac{p(v) e^{\sprod{s}{v}}}{\kappa(s)} \cdot \frac{p(v') e^{\sprod{s}{v'}}}{\kappa(s)} 
		\Big(\sin \Big\langle \frac{\theta}{2}, v-v'\Big\rangle\Big)^2.
	\end{align}
	Now, we need the following estimate.
	\begin{claim}
		\label{clm:2}
		For every $v_0 \in \calV$, there is $\xi > 0$ such that for all $\theta \in \mathscr{D}_d^\epsilon$
		there is $v \in \calV$ satisfying
		\begin{equation}
			\label{eq:15}
			\Big| \sin \Big\langle \frac{\theta}{2}, v - v_0 \Big\rangle \Big|
			\geq
			\xi.
		\end{equation}
	\end{claim}
	For the proof, we assume to contrary that for some $v_0 \in \calV$ and all $m \in \NN$ there is
	$\theta_m \in \mathscr{D}_d^\epsilon$ such that for all $v \in \calV$
	\[
		\Big|
		\sin \Big\langle \frac{\theta_m}{2}, v - v_0 \Big\rangle
		\Big|
		\leq \frac{1}{m}.
	\]
	By compactness of $\mathscr{D}_d^\epsilon$, there is a subsequence $(\theta_{m_k} : k \in \NN)$ convergent to
	$\theta' \in \mathscr{D}_d^\epsilon$. Then for all $v \in \calV$
	\[
		\sin \Big\langle \frac{\theta'}{2}, v-v_0 \Big\rangle = 0,
	\]
	and hence
 	\[
		\kappa(i\theta') = e^{i \sprod{\theta'}{v_0}}
	\]
	which is impossible since $\theta' \in \mathscr{D}_d^\epsilon$.

	In order to apply Claim \ref{clm:2}, we select any $v_0$ satisfying
	\[
		\sprod{v_0}{s} = \max\big\{\sprod{v}{s} : v \in \calV\big\},
	\]
	thus $e^{\sprod{s}{v_0}} \geq \kappa(s)$. By Claim \ref{clm:2} and \eqref{eq:5}, for each
	$\theta \in \mathscr{D}_d^\epsilon$ there is $v \in \calV$ such that
	\[
		1 - \bigg| \frac{\kappa(s+i\theta)}{\kappa(s)} \bigg|^2
		\geq
		2 p(v_0) \frac{p(v) e^{\sprod{s}{v}}}{\kappa(s)} \xi^2.
	\]
	Although $v$ may depend on $\theta$, by Theorem \ref{th:2}, there are $C > 0$ and $\eta \geq 1$
	such that for all $\theta \in \mathscr{D}_d^\epsilon$
	\[
		1 - \bigg| \frac{\kappa(s + i\theta)}{\kappa(s)} \bigg|^2
		\geq
		C \dist(\delta, \partial \calM)^\eta.
	\]
	Hence,
	\[
		\bigg| \frac{\kappa(s + i \theta)}{\kappa(s)} \bigg|^2
		\leq
		1 - C \dist(\delta, \partial \calM)^\eta
		\leq
		e^{-C \dist(\delta, \partial \calM)^\eta}.
	\]
	Since
	\[
		n \dist(\delta, \partial \calM)^\eta 
		= n^{\frac{1}{2}} \big(n \dist(\delta, \partial \calM)^{2 \eta}\big)^{\frac{1}{2}}
	\]
	we obtain that
	\[
		e^{-C n \dist(\delta, \partial \calM)^\eta}
		\leq C' n^{-\frac{d}{2}-1} \dist(\delta, \partial \calM)^{-2\eta}, 
	\]
	provided $n$ is large enough. Finally, since $\det B_s \leq 1$ we conclude that
	\[
		\int_{\mathscr{D}_d^\epsilon} \bigg|\frac{\kappa(s+i\theta)}{\kappa(s)}\bigg|^n \dth
		\leq
		C n^{-\frac{d}{2}-1} (\det B_s)^{-\frac{1}{2}} \dist(\delta, \partial \calM)^{-2\eta}.
	\]
	Next, let us consider the integral over
	\begin{equation}
		\label{eq:4}
		\bigcup_{\theta_0 \in \calU} \big\{\theta \in [-\pi, \pi)^d : \norm{\theta - \theta_0}_2 < \epsilon \big\}.
	\end{equation}
	By taking $\epsilon$ satisfying
	\begin{equation}
		\label{eq:2}
		\epsilon < \min\bigg\{\frac{\norm{\theta_0 - \theta_0'}_2}{2} : \theta_0, \theta_0' \in \calU\bigg\},
	\end{equation}
	we guarantee that the sets in \eqref{eq:4} are disjoint. Moreover, for any $\theta_0 \in \calU$, by the change of
	variables and \eqref{eq:25} we get 
	\begin{align*}
		\int_{\norm{\theta - \theta_0}_2 < \epsilon}
		\bigg(\frac{\kappa(s + i\theta)}{\kappa(s)}\bigg) ^n
		e^{-i\sprod{\theta}{x}} \dth
		& =
		\int_{\norm{\theta}_2 < \epsilon} 
		\bigg( \frac{\kappa(s + i\theta + i\theta_0)}{\kappa(s)} \bigg)^n
		e^{-i\sprod{\theta+\theta_0}{x}}
		\dth\\
		& =
		\int_{\norm{\theta}_2 < \epsilon}
		\bigg(\frac{\kappa(s + i \theta)}{\kappa(s)}\bigg)^n
		e^{-i\sprod{\theta}{x}} \dth.
	\end{align*}
	Therefore,
	\[
		\sum_{\theta_0 \in \calU}
		\int_{\norm{\theta-\theta_0}_2 < \epsilon}
		\bigg(\frac{\kappa(s + i \theta)}{\kappa(s)}\bigg)^n
		e^{-i\sprod{\theta}{x}} \dth
		=
		r
		\int_{\norm{\theta}_2 < \epsilon}
		\bigg(
		\frac{\kappa(s + i \theta)}{\kappa(s)}
		\bigg)^n
		e^{-i\sprod{\theta}{x}}
		\dth.
	\]
	Further, by \eqref{eq:11}, a function $z \mapsto \Log \kappa(z)$, where $\Log$ denotes the principal value of the 
	complex logarithm, is an analytic function in a strip $\RR^d + i B$ where 
	\[
		B = \Big\{b \in \RR^d : \norm{b}_2 < \big(2 \cdot \max\{\norm{v}_2 : v \in \calV\}\big)^{-1} \Big\}.
	\]
	Since for any $u \in \RR^d$ we have
	\[
		D_u^2 \Log \kappa(z) = 
		\frac{1}{2} 
		\sum_{v, v' \in \calV}
		\frac{p(v) e^{\sprod{z}{v}}}{\kappa(z)}
		\cdot
		\frac{p(v') e^{\sprod{z}{v'}}}{\kappa(z)}
		\sprod{u}{v-v'}^2,
	\]
	by Lemma \ref{lem:2}, there is $C > 0$ such that for all $\sigma \in \NN^d$ and $a + i b \in \RR^d + i B$
	\begin{equation}
		\label{eq:14}
		\big|\der{\sigma} \big( D_u^2 \Log \kappa \big)(a+ib) \big|
		\leq
		C^{\abs{\sigma} + 1} \sigma! B_a(u, u).
	\end{equation}
	If 
	\begin{equation}
		\label{eq:21}
		\epsilon < \big(2 \cdot \max\{\norm{v}_2 : v \in \calV\}\big)^{-1},
	\end{equation}
	then for $\norm{\theta}_2 < \epsilon$ we can define 
	\[
		\psi(s, \theta) = \Log \kappa(s+i\theta) - \log \kappa(s) - i\sprod{\theta}{\delta} 
		+ \frac{1}{2}B_s(\theta, \theta).
	\]
	Hence,
	\[
		\int_{\norm{\theta}_2 < \epsilon} \bigg(\frac{\kappa(s+i\theta)}{\kappa(s)}\bigg)^n e^{-i \sprod{\theta}{x}} \dth
		=
		\int_{\norm{\theta}_2 < \epsilon} e^{-\frac{n}{2} B_s(\theta, \theta)} e^{n\psi(s, \theta)} \dth,
	\]
	and to finish the proof of theorem it is enough to show
	\begin{claim}
		\label{clm:4}
		\begin{multline}
			\label{eq:17}
			\int_{\norm{\theta}_2 < \epsilon}
			e^{-\frac{n}{2}B_s(\theta, \theta)}
			e^{n \psi(s, \theta)} \dth \\
			=
			(2\pi n)^{\frac{d}{2}}
			(\det B_s)^{-\frac{1}{2}} 
			\Big(1+ \calO(n^{-1} \dist(\delta, \partial \calM)^{-2 \eta})\Big).
		\end{multline}
	\end{claim}
	Using the integral form for the reminder, we get
	\[
		\psi(s, \theta) = -\frac{i}{2} \int_0^1 (1 - t)^2 D_\theta^3 \Log \kappa(s + i t \theta) {\: \rm d}t.
	\]
	In view of \eqref{eq:14}, there is $C > 0$ such that for all $s \in \RR^d$ and $\theta \in B$
	\begin{equation}
		\label{eq:27}
		\abs{\psi(s, \theta)} \leq C \norm{\theta}_2 B_s(\theta, \theta).
	\end{equation}
	Therefore, by choosing
	\begin{equation}
		\label{eq:23}
		\epsilon < \bigg(4 \cdot \sup\bigg\{\frac{\abs{\psi(a, b)}}{\norm{b}_2 B_a(b, b)} : 
		a \in \RR^d, b \in B\bigg\}\bigg)^{-1},
	\end{equation}
	if $\norm{\theta}_2 < \epsilon$ then we may estimate
	\begin{equation}
		\label{eq:18}
		\abs{ \psi(s, \theta) }
		\leq
		\frac{1}{4}
		B_s(\theta, \theta).
	\end{equation}
	Next, we write
	\begin{align*}
		e^{n\psi(s, \theta)}
		& = \left(e^{n\psi(s, \theta)} - 1 - n\psi(s, \theta)\right)
		+ \left(n\psi(s, \theta) - n \frac{D_\theta^3 \psi(s, 0)}{3!}\right) \\
		&\quad + n \frac{D_\theta^3 \psi(s, 0)}{3!} + 1,
	\end{align*}
	and split \eqref{eq:17} into four corresponding integrals.

	Since for $a \in \CC$
	\[
		\abs{e^a - 1 - a} \leq \frac{\abs{a}^2}{2} e^{\abs{a}},
	\]
	by \eqref{eq:27} and \eqref{eq:18}, the first integrand can be estimated as follows
	\begin{align*}
		\left| e^{n\psi(s, \theta)} - 1 - n\psi(s, \theta) \right|
		& \leq \frac{1}{2} e^{\frac{n}{4} B_s(\theta, \theta)} \big(n \psi(s, \theta)\big)^2 \\
		& \leq C e^{\frac{n}{4} B_s(\theta, \theta)} n^2 \norm{\theta}_2^2 B_s(\theta, \theta)^2.
	\end{align*}
	Because
	\begin{equation}
		\label{eq:32}
		\norm{\theta}_2^2
		\leq \vnorm{B_s^{-1}} B_s(\theta, \theta),
	\end{equation}
	we obtain
	\begin{align}
		\nonumber
		\bigg\lvert
		& \int_{\norm{\theta}_2 < \epsilon} 
		e^{-\frac{n}{2} B_{s}(\theta, \theta)} 
		\big( e^{n\psi(s, \theta)} - 1 - n\psi(s, \theta) \big) \dth 
		\bigg\rvert \\
		\nonumber
		& \hspace{12em} \leq
		C n^2 \vnorm{B_s^{-1}}
		\int_{\norm{\theta}_2 < \epsilon}
		e^{-\frac{n}{4} B_s(\theta, \theta)}
		B_s(\theta, \theta)^3
		\dth \\
		\label{eq:29}
		& \hspace{12em} \leq
		C n^{-\frac{d}{2}-1} (\det B_{s})^{-\frac{1}{2}} \big\lVert B_{s}^{-1} \big\rVert.
	\end{align}
	Furthermore, by \eqref{eq:14},
	\begin{align*}
		\bigg| \psi(s, \theta) - \frac{D_\theta^3 \psi(s, 0)}{3!} \bigg|
		& =
		\left|
		\frac{1}{3!} \int_0^1 (1-t)^3 D_\theta^4 \Log \kappa(s + i t\theta) {\: \rm d}t
		\right| \\
		& \leq C \norm{\theta}_2^2 B_s(\theta, \theta),
	\end{align*}
	which together with \eqref{eq:32} implies
	\begin{align}
		\nonumber
		& \bigg| \int_{\norm{\theta}_2 < \epsilon} e^{-\frac{n}{2} B_{s}(\theta, \theta)}
		n \bigg( \psi(s, \theta) - \frac{D_\theta^3 \psi(s, 0)}{3!} \bigg) \dth	\bigg| \\
		\nonumber
		& \hspace{12em} \leq 
		C n \vnorm{B_s^{-1}}
		\int_{\norm{\theta}_2 < \epsilon} 
		e^{-\frac{n}{2} B_s(\theta, \theta)} 
		B_s(\theta, \theta)^2 \dth \\
		\label{eq:31}
		& \hspace{12em} \leq
		C n^{-\frac{d}{2} - 1} (\det B_{s})^{-\frac{1}{2}} \big\lVert B_{s}^{-1} \big\rVert.
	\end{align}
	The third integral is equal zero. The last one, by \eqref{eq:32}, we can estimate
	\begin{align}
		\nonumber
		& \bigg\lvert
		\int_{\norm{\theta}_2 < \epsilon} 
		e^{-\frac{n}{2} B_s(\theta, \theta)} \dth
		- \int_{\RR^d}
		e^{-\frac{n}{2} B_s(\theta, \theta)} \dth
		\bigg\rvert \\
		\nonumber
		& \hspace{12em} \leq
		e^{-\frac{n}{4} \epsilon^2 \vnorm{B_s^{-1}}^{-1}}
		\int_{\RR^d} e^{-\frac{n}{4} B_s(\theta, \theta)} \dth \\
		\label{eq:33}
		& \hspace{12em} \leq
		C n^{-\frac{d}{2} -1} (\det B_s)^{-\frac{1}{2}} \vnorm{B_s^{-1}}.
	\end{align}
	By putting estimates \eqref{eq:29}, \eqref{eq:31} and \eqref{eq:33} together, we obtain
	\begin{align*}
		\int_{\norm{\theta}_2 < \epsilon} e^{-\frac{n}{2}B_s(\theta, \theta)} e^{n\psi(s, \theta)} \dth
		=
		n^{-\frac{d}{2}}
		(\det B_s)^{-\frac{1}{2}} 
		\big(
		(2\pi)^{\frac{d}{2}}
		+
		\calO\big(n^{-1} \vnorm{B_s^{-1}}\big)\big).
	\end{align*}
	Finally, by \eqref{eq:1} and Theorem \ref{th:2}, there is $C > 0$ such that for all $\delta \in \calM$
	and any $u \in \RR^d$
	\begin{equation}
		\label{eq:40}
		B_0(u, u) \geq B_s(u, u) \geq C \dist(\delta, \partial \mathcal{M})^{2 \eta} B_0(u, u).
	\end{equation}
	Hence,
	\[
		\vnorm{B_s^{-1}} = \Big(\min\{B_s(u, u) : \norm{u}_2 = 1\}\Big)^{-1}
		\leq C \dist(\delta, \partial \calM)^{-2\eta},
	\]
	which concludes the proof of Claim \ref{clm:4}.
\end{proof}

Although, the asymptotic in Theorem \ref{th:7} is uniform on a large region with respect to $n$ and $x$,
it depends on the implicit function $s(\delta)$. By \eqref{eq:40}, we may estimate
\[
	1 \geq \det B_s \geq C \dist(\delta, \partial \calM)^{2 r \eta}.
\]
Since $\calM \ni \delta \mapsto s(\delta)$ is
real analytic, for each $\epsilon > 0$ there is $C_\epsilon > 0$ such that if
$\dist(\delta, \partial \calM) \geq \epsilon$ then
\begin{equation}
	\label{eq:34}
	\norm{s}_1 \leq C_\epsilon \norm{\delta - \delta_0}_1,
\end{equation}
and
\[
	\Big|
	( \det B_s \big)^{-\frac{1}{2}} 
	-
	( \det B_0 \big)^{-\frac{1}{2}}
	\Big|
	\leq
	C_\epsilon \norm{\delta - \delta_0}_1
\]
where $\delta_0 = \sum_{v \in \calV} p(v) v$. In the most applications the following form of the asymptotic of $p(n; x)$
is sufficient.
\begin{corollary}
	\label{cor:1}
	For every $\epsilon > 0$, $j \in \{0, \ldots, r-1\}$, $n \in \NN$ and $x \in X_j$, 
	if $n \equiv j \pmod r$ then
	\[
		p(n; x) =
		(2\pi n)^{-\frac{d}{2}} (\det B_0)^{-\frac{1}{2}}
		e^{-n \phi(\delta)} 
		\left(r + \calO(\norm{\delta - \delta_0}_1) + \calO(n^{-1})\right),
	\]
	otherwise $p(n; x) = 0$, provided that $\dist(\delta, \partial \calM) \geq \epsilon$.
\end{corollary}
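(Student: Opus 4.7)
The plan is to deduce the corollary directly from Theorem \ref{th:7} combined with the analyticity estimates on $\delta \mapsto s(\delta)$ collected in the paragraph immediately preceding the corollary statement. First, since $\dist(\delta, \partial \calM) \geq \epsilon$, the factor $\dist(\delta, \partial \calM)^{-2\eta}$ appearing in the error term of Theorem \ref{th:7} is bounded by a constant depending only on $\epsilon$. Consequently
\[
    p(n; x) = (2\pi n)^{-\frac{d}{2}} (\det B_s)^{-\frac{1}{2}} e^{-n\phi(\delta)} \big(r + \calO(n^{-1})\big),
\]
with the implicit constant allowed to depend on $\epsilon$.

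The second step is to replace $(\det B_s)^{-\frac{1}{2}}$ by $(\det B_0)^{-\frac{1}{2}}$. The estimate
\[
    \big|(\det B_s)^{-\frac{1}{2}} - (\det B_0)^{-\frac{1}{2}} \big| \leq C_\epsilon \norm{\delta - \delta_0}_1
\]
is stated just above the corollary, and since $(\det B_0)^{-\frac{1}{2}}$ is a fixed positive constant one obtains
\[
    (\det B_s)^{-\frac{1}{2}} = (\det B_0)^{-\frac{1}{2}} \big(1 + \calO(\norm{\delta - \delta_0}_1)\big).
\]

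Substituting this into the previous display and expanding the product
\[
    \big(1 + \calO(\norm{\delta - \delta_0}_1)\big) \big(r + \calO(n^{-1})\big)
\]
produces $r + \calO(\norm{\delta - \delta_0}_1) + \calO(n^{-1})$ once the cross term $\calO(\norm{\delta - \delta_0}_1 \cdot n^{-1})$ is absorbed into $\calO(n^{-1})$; this absorption is legitimate because $\norm{\delta - \delta_0}_1$ is bounded on the compact set $\overline{\calM}$. The complementary case $p(n; x) = 0$ when $n \not\equiv j \pmod r$ and $x \in X_j$ is inherited from \eqref{eq:6}. I anticipate no substantive obstacle, since all the heavy lifting has been carried out in Theorem \ref{th:7} and in the analyticity estimates recalled before the corollary.
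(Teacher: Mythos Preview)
Your proposal is correct and follows exactly the route the paper intends: the paper does not give a separate proof of the corollary but simply records, in the paragraph preceding it, the two ingredients you use (boundedness of $\dist(\delta,\partial\calM)^{-2\eta}$ on the region and the estimate $\lvert(\det B_s)^{-1/2}-(\det B_0)^{-1/2}\rvert\le C_\epsilon\norm{\delta-\delta_0}_1$), leaving the substitution into Theorem~\ref{th:7} to the reader. Your write-up is precisely that substitution.
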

\begin{remark}
	\label{rem:1}
	It is not possible to replace $\phi(\delta)$ by $\frac{1}{2} B_0^{-1}(\delta - \delta_0, \delta - \delta_0)$
	without introducing an error term of a very different nature. Indeed, by \eqref{eq:10},
	\[
		e^{-n \phi(\delta)} = e^{-\frac{n}{2}B_0^{-1}(\delta-\delta_0, \delta - \delta_0)} 
		e^{\calO(n \norm{\delta - \delta_0}^3)}.
	\]
	Since $\delta_0 \in \calM$, if $\delta$ approaches $\partial \calM$ then $n \norm{\delta - \delta_0}^3$ cannot be
	small. Notice that the third power may be replaced by an higher degree if the random walk has vanishing moments.
	In particular, for the simple random walk on $\ZZ^d$ (see Example \ref{ex:1}), for all $\epsilon > 0$,
	$x \in \ZZ^d$ and $n \in \NN$, if $\norm{x}_1 + n \in 2 \NN$ then
	\[
		p(n; x) =
		(2\pi)^{-\frac{d}{2}}
		\left(\frac{d}{n}\right)^{\frac{d}{2}}
		e^{-\frac{d}{2n} \norm{x}^2_2}
		\big(2 + \calO(n \abs{\delta}^4) + \calO(n^{-1})\big)
	\]
	otherwise $p(n; x) = 0$, uniformly with respect to $n$ and $x$ provided that $\norm{x}_1 \leq (1-\epsilon) n$.
\end{remark}

\begin{remark}
	It is relatively easy to obtain a global upper bound: for all $n \in \mathbb{N}$ and $x \in \mathbb{Z}^d$
	\[
		p(n; x) \leq e^{-n\phi(\delta)}.
	\]
	Indeed, by Claim \ref{clm:1}, for $u \in \RR^d$, we have
	$$
	p(n; x) = \bigg(\frac{1}{2\pi}\bigg)^d \kappa(u)^n e^{-\sprod{u}{x}}
	\int_{\mathscr{D}_d} \bigg( \frac{\kappa(u+i\theta)}{\kappa(u)} \bigg)^n
	e^{-i \sprod{\theta}{x}} \dth.
	$$
	Hence, by Theorem \ref{th:1},
	\[
		p(n; x) 
		\leq \min\big\{ \kappa(u)^n e^{-\sprod{u}{x}} : u \in \mathbb{R}^d \big\} 
		\leq e^{-n\phi(\delta)}.
	\]
\end{remark}

\appendix
\section{Applications}
\label{sec:1}
In this section we apply Corollary \ref{cor:1} to simple random walks on triangular and hexagonal lattices.

\subsection{The triangular lattice}
The triangular lattice consists of the set of points 
\[
	L = \ZZ \lambda_1 \oplus \ZZ \lambda_2,
\]
where $\lambda_1 = (-1/2, \sqrt{3}/2)$, $\lambda_2 = (1/2, \sqrt{3}/2)$. Let $\tau: L \rightarrow \{0, 1, 2\}$
be defined by setting
\[
	\tau\big(j \lambda_1 + j' \lambda_2\big) 
	\equiv j + 2j' \pmod 3.
\]
Each point $x \in L$ has six closest neighbours, namely,
\begin{alignat*}{3}
	x + \lambda_1, \quad &x - \lambda_1, \quad &x + \lambda_1 - \lambda_2, \\
	x + \lambda_2, \quad &x - \lambda_2, \quad &x - \lambda_1 + \lambda_2.
\end{alignat*}
Let $p$ be the transition function of the simple random walk on $L$. Observe that the mapping 
\[
	L \ni j \lambda_1 + j' \lambda_2 \mapsto j e_1 + j' e_2 \in \ZZ^2
\]
allows us instead of $p$ to work with a transition function $\tilde{p}$ such that
\[
	\tilde{p}(e_1) = \tilde{p}(-e_1) = \tilde{p}(e_1 - e_2) 
	= \tilde{p}(e_2) = \tilde{p}(-e_2) = \tilde{p}(-e_1+e_2) = \frac{1}{6}.
\]
Then the corresponding set $\calM \subset \RR^2$ is the interior of
\[
	\conv \big\{e_1, e_2 , e_1-e_2, -e_1, -e_2, -e_1+e_2\big\}.
\]
Moreover, for $u \in \RR^2$
\begin{align*}
	\kappa(u) & = 
			\frac{1}{6} e^{u_1} + \frac{1}{6} e^{-u_1} + \frac{1}{6} e^{u_2} + \frac{1}{6} e^{-u_2}
			+\frac{1}{6} e^{u_1 - u_2} + \frac{1}{6} e^{-u_1+u_2} \\
			& =
			\frac{1}{3} \cosh u_1 + \frac{1}{3} \cosh u_2 + \frac{1}{3} \cosh (u_1 - u_2).
\end{align*}
In particular, $\calU = \{0\}$. Next, by using \eqref{eq:1} we can calculate the quadratic form $B_0$,
\[
	B_0 =
	\frac{1}{3} 
	\begin{pmatrix}
		2 & -1 \\
		-1 & 2 \\
	\end{pmatrix},
\]
thus $\det B_0 = 1/3$. Finally, by applying Corollary \ref{cor:1} to $\tilde{p}$ we obtain the following precise
asymptotic of $p$: for every $\epsilon > 0$ and all $n \in \NN$ and $x \in L$
\begin{equation}
	\label{eq:20}
	p(n; x)
	=
	(2\pi n)^{-1} e^{-n \phi(\delta)} \big(\sqrt{3} + \calO(\norm{\delta}_1) + \calO(n^{-1})\big),
\end{equation}
uniformly with respect to $n$ and $x$ such that $\dist(\delta, \partial \calM) \geq \epsilon$
where for $x =j \lambda_1 + j' \lambda_2$ we have set 
\[
	\delta = \frac{j}{n} e_1 + \frac{j'}{n} e_2.
\]
The reader may wish to compare the asymptotic \eqref{eq:20} with know results (see \cite[Example 2]{ks2},
\cite{ks1} and \cite[Section 7.2]{ikk}).

\subsection{The hexagonal lattice}
The hexagonal lattice $H$ one may obtain from the triangular lattice by removing all vertices $x \in L$ such that 
$\tau(x) = 1$. Each vertex $x \in H$ has three neighbours,
\[
	\begin{cases}
	x + \lambda_2, \quad x - \lambda_1, \quad x - \lambda_2 + \lambda_1,& \quad\text{if } \tau(x) = 0 \\
	x + \lambda_1, \quad x - \lambda_2, \quad x - \lambda_1 + \lambda_2,& \quad\text{if } \tau(x) = 2.
	\end{cases}
\]
Let $p$ be the transition function of the simple random walk on $H$, i.e.
$p(x, y) = 1/3$ if $x$ and $y$ are closest neighbours. Observe that $p$ is irreducible and periodic with period $r = 2$.
We have
\[
	X_0 = \{x \in H : \tau(x) = 0\}, \quad X_1 = \{x \in H : \tau(x) = 2\}.
\]
Let us consider a new random walk given by a transition function $q$
\[
	q(x, y) = \sum_{\atop{u \sim x}{u \sim y}} p(x, u) p(u, y),
\]
where the sum is taken over $u \in H$ being a common neighbour of $x$ and $y$. It is easy to check that
$q(x, x) = 1/3$ and $q(x, y) = 1/9$ where $y$ belongs to the set
\begin{alignat*}{3}
	&x + \lambda_1 + \lambda_2, \quad &x + 2\lambda_1 - \lambda_2, \quad &x+\lambda_1 - 2\lambda_2,\\
	&x - \lambda_1 - \lambda_2, \quad &x - \lambda_1 + 2\lambda_2, \quad &x-2 \lambda_1 + \lambda_2.
\end{alignat*}
If $x \in X_0$ then
\[
	p(n; 0, x) = 
	\begin{cases}
		q(n/2; 0, x) & \text{if } n \equiv 0 \pmod 2,\\
		0 & \text{otherwise.}
	\end{cases}
\]
If $x \in X_1$ and $n \equiv 1 \pmod 2$ then
\begin{align*}
	p(n; 0, x) &= \frac{1}{3} q((n-1)/2; 0, x+\lambda_1) \\
	&\phantom{=}+ \frac{1}{3} q((n-1)/2; 0, x-\lambda_2) \\
	&\phantom{=}+ \frac{1}{3} q((n-1)/2; 0, x+\lambda_1-\lambda_2)
\end{align*}
otherwise $p(n; 0, x) = 0$. First, we find the asymptotic of $q$. We notice that $q$ is the transition function
of an irreducible random walk on the triangular lattice 
\[
	L = \ZZ (\lambda_1 + \lambda_2) \oplus \ZZ (2 \lambda_2 - \lambda_1).
\]
Therefore, under the mapping
\[
	L \ni j \lambda_1 + j' \lambda_2 \mapsto \frac{2 j+ j'}{3}e_1 + \frac{-j+j'}{3} e_2 \in \ZZ^2
\]
the transition function $q$ is mapped onto $\tilde{q}$ a transition function of a random walk on the integer lattice
$\ZZ^2$ where
\begin{align*}
	& \tilde{q}(0) = \frac{1}{3},\\
	& \tilde{q}(e_1) = \tilde{q}(-e_1) = 
	\tilde{q}(e_2) = \tilde{q}(-e_2) = 
	\tilde{q}(e_1-e_2) = \tilde{q}(e_2-e_1) = \frac{1}{9}.
\end{align*}
In this case, for $u \in \RR^2$ we have
\begin{align*}
	\kappa(u) &=
	\frac{1}{3} + \frac{1}{9} e^{u_1} + \frac{1}{9} e^{-u_1} + \frac{1}{9} e^{u_2} + \frac{1}{9} e^{u_2}
	+\frac{1}{9} e^{u_1-u_2} + \frac{1}{9} e^{-u_1+u_2}\\
	&=
	 \frac{1}{3} + \frac{2}{9} \cosh u_1 + \frac{2}{9} \cosh u_2 + \frac{2}{9} \cosh (u_1-u_2).
\end{align*}
Again, the set $\calM \subset \RR^2$ is the interior of
\[
	\conv \big\{e_1, e_2 , e_1-e_2, -e_1, -e_2, -e_1+e_2\big\}.
\]
It is easy to calculate that the quadratic form $B_0$ is equal to
\[
	B_0 =
	\frac{2}{9} 
	\begin{pmatrix}
	2 & -1 \\
	-1 & 2
	\end{pmatrix},
\]
thus $\det B_0 = 4/27$. By Corollary \ref{cor:1}, for every $\epsilon > 0$, all $n \in \NN$ and
$x \in L$ 
\begin{equation}
	\label{eq:9}
	q(n; x) 
	=
	\big(2 \pi n\big)^{-1} e^{-n \phi(\delta)} \big(3\sqrt{3} + \calO(\norm{\delta}_1) + \calO(n^{-1})\big)
\end{equation}
uniformly with respect to $n$ and $x$ such that $\dist(\delta, \partial \calM) \geq \epsilon$
where for $x = j \lambda_1 + j' \lambda_2$ we have set
\begin{equation}
	\label{eq:36}
	\delta = \frac{2 j + j'}{3 n} e_1 + \frac{-j + j'}{3 n} e_2.
\end{equation}
Although, for $x \in X_1$, we need to apply \eqref{eq:9} for three times with different $x$, the
exponential factors are comparable. Indeed, for $x \in H$, $x = j \lambda_1 + j' \lambda_2$ let $\delta$ be defined
by the formula \eqref{eq:36}. Fix $\epsilon > 0$ and let us consider $x \in X_1$ and $n \in \NN$ such that 
\[
	\dist(\delta, \partial \calM) \geq \epsilon.
\]
Let $\tilde{\delta}$ be any element of a set
\[
	\delta + \bigg\{\frac{2}{3n} e_1 - \frac{1}{3n} e_2, -\frac{1}{3n}e_1 - \frac{1}{3n} e_2,
	\frac{1}{3n} e_1 - \frac{2}{3n} e_n\bigg\}.
\]
Observe that
\begin{equation}
	\label{eq:35}
	\lvert \delta - \tilde{\delta} \rvert_1 \leq \frac{1}{n}.
\end{equation}
Let us denote by $s \in \RR^2$ the unique solution to $\nabla \log \kappa(s) = \delta$, then by \eqref{eq:30}, we can
estimate
\[
	\phi(\delta) - \phi(\tilde{\delta})
	\leq
	\phi(\delta) - \sprod{\tilde{\delta}}{s} + \log \kappa(s) = \sprod{\delta - \tilde{\delta}}{s}.
\]
Hence, by \eqref{eq:34} and \eqref{eq:35}, we get
\[
	\phi(\delta) - \phi(\tilde{\delta})
	\leq
	C_\epsilon \frac{1}{n} \norm{\delta}_1.
\]
In particular,
\[
	e^{-n \phi(\tilde{\delta})} 
	= 
	e^{-n \phi(\delta)} e^{n(\phi(\delta) - \phi(\tilde{\delta}))}
	=
	e^{-n\phi(\delta)} \big(1 + \calO(\norm{\delta}_1)\big).
\]
Now, we are ready to apply \eqref{eq:9} to obtain the precise asymptotic of $p$. For every $\epsilon > 0$ and all
$x \in H$, $n \in \NN$, $j \in \{0, 1\}$, if $x \in X_j$ then
\begin{align*}
	p(n; 0, x) = 
	\begin{cases}
		(2\pi n)^{-1} e^{-n\phi(\delta)} \big(3 \sqrt{3} + \calO(\norm{\delta}_1) + \calO(n^{-1})\big),
		& \text{if } n \equiv j \pmod 2,\\
		0, & \text{otherwise,}
	\end{cases}
\end{align*}
uniformly with respect to $n$ and $x$ such that $\dist(\delta, \partial \calM) \geq \epsilon$. The reader may compare
the above asymptotic with \cite[Section 7.3]{ikk}, \cite[Example 3]{ks2} and \cite{ks1}.

\begin{bibliography}{heat}
\bibliographystyle{plain}
\end{bibliography}
\end{document}